\theoremstyle{plain} 
\newtheorem{thm}{\bf Theorem}[section]
\newtheorem{pro}[thm]{\bf Proposition}
\newtheorem{lem}[thm]{\bf Lemma}
\newtheorem{rem}[thm]{\bf Remark}
\newtheorem{open}[thm]{\bf Open Problem}
\newtheorem{ex}[thm]{\bf Example}
\DeclareMathOperator{\Hom}{Hom}
\DeclareMathOperator{\Ext}{Ext}
\DeclareMathOperator{\Ann}{Ann}
\DeclareMathOperator{\Proj}{Proj}
\DeclareMathOperator{\im}{Im}
\DeclareMathOperator{\Spec}{Spec}
\DeclareMathOperator{\cM}{\mathcal{M}}
\DeclareMathOperator{\Char}{char}
\DeclareMathOperator{\E}{E}
\title{On Lyubeznik numbers of projective schemes}
\author{Wenliang Zhang}
\begin{document}

\begin{abstract}
Let $X$ be an arbitrary projective scheme over a field $k$. Let $A$ be the local ring at the vertex of the affine cone for some embedding $\iota: X\hookrightarrow \mathbb{P}^n_k$. G. Lyubeznik asked (in \cite{l2}) whether the integers $\lambda_{i,j}(A)$ (defined in \cite{l1}), called the Lyubeznik numbers of $A$, depend only on $X$, but not on the embedding. In this paper, we  make a big step toward a positive answer to this question by proving that in positive characteristic, for a fixed $X$, the Lyubezink numbers 
$\lambda_{i,j}(A)$ of the local ring $A$, can only
achieve finitely many possible values under all choices of embeddings. 
\end{abstract}

\maketitle

\section{Introduction}
Let $A$ be a local ring that contains a field and admits a surjection from a $n$-dimensional regular 
local ring $(R,\mathfrak{m})$ containing a field. Let $I\subset R$ be the kernel of the surjection, and $k=R/{\mathfrak{m}}$ the residue field of $R$. 
The Lyubeznik numbers $\lambda_{i,j}(A)$ (Definition 4.1 in \cite{l1}) 
are defined to be $\dim_k(\Ext_R^i(k,H^{n-j}_I(R)))$. And it was proven in \cite{l1} that they are all finite and depend 
only on $A,i$ and $j$, but neither on $R$, nor on the surjection $R\rightarrow A$. \par

The Lyubeznik numbers have been studied by many authors; see, for example, \cite{bb}, \cite{ls}, \cite{k1}, \cite{k2},  \cite{l3}, \cite{w}, and \cite{zh}. In particular, in \cite{zh}, it is proven that, for a $d$-dimensional projective
 scheme $X$, where $A$ is the local ring at the vertex of the affine cone for some embedding $\iota:X\hookrightarrow \mathbb{P}^n_k$, the highest Lyubeznik number $\lambda_{d+1,d+1}(A)$ depends only on $X$ but not on the embedding. This gives some supporting evidence for a positive answer to the following open problem posed in \cite{l2}

\begin{open}
\label{open1}
Let $X$ be a projective scheme over a field $k$, and let $A$ be the local ring at the vertex of the affine cone over $X$ for some 
embedding of $X$ into a projective space. Is it true that $\lambda_{i,j}(A)$ depend only on $X$ but not on the embedding?
\end{open}

In this paper, we provide some strong supporting evidence to this open problem. In fact we come very close to a proof that in positive characteristic the answer to this open problem is indeed positive.\par
Let $R$ be a Noetherian ring of positive characteristic $p$ and let $k$ be a coefficient field of $R$ which for the purpose of this introduction we assume to be perfect. Let $M$ be an $R$-module equipped with an action of Frobenius $f$, i.e., a map of abelian groups $f:M\to M$ such that $f(rm)=r^pf(m)$ for all $r\in R,m\in M$. The stable part of $M$, denoted by $M_s$, is defined to be 
$$\bigcap ^{\infty}_{i=1}f^i(M)$$
(in general, this is not an $R$-module, but a $k$-vector-space). The operation of `taking the stable part' has played an important role in the study of local cohomology in positive characteristic, for example, it has been used to estimate local cohomological dimensions (cf. \cite{lcd}) and to prove vanishing results of local cohomology (cf. \cite{l5}), etc. In this paper, we will give a description of $\lambda_{i,j}(A)$ ($A$ as in Open Problem \ref{open1}) in terms of the stable part of a certain module $\cM$ (defined in the next paragraph).\par

Let $\iota:X\hookrightarrow \mathbb{P}^n_k$ be an embedding of $X$. Let $d$ denote the dimension of $X$. Let $R=k[x_0,\cdots,x_n]$ and $I$ be the defining ideal of $X$ ($I$ is a homogeneous ideal of $R$). Let $$\cM=\Ext^{n+1-i}_R(\Ext^{n+1-j}_R(R/I,R),R).$$ There is a natural action of Frobenius $f$ on $\cM$ (this is explained in detail in Proposition \ref{f-on-ext}). The following theorem is the main theorem of this paper.
\newtheorem*{mthm}{\bf Main Theorem}
\begin{mthm}
Let $R,I,\cM,d$ be as above and $A=(R/I)_{(x_0,\cdots,x_n)}$ be the local ring at the vertex of the affine cone over $X$ under the embedding $\iota:X\hookrightarrow \mathbb{P}^n_k$. Then
$$\lambda_{i,j}(A)=\dim_k(\cM_s),$$
for $0\leq i,j\leq d+1$.
\end{mthm}

Since $\cM$ is naturally graded and the action of Frobenius satisfies $$\deg(f(m))=p\deg(m)$$ (this is explained in detail in Section 3), $\cM_s$ is contained in $\cM_0$, the degree-0 piece of $\cM$, which is a finite dimensional $k$-vector-space, and hence $$\dim_k(\cM_s)\leq \dim_k(\cM_0),$$ i.e., 
$$\lambda_{i,j}(A)\leq\dim_k(\cM_0).$$ But $\cM_0$ does not depend on the embedding (this key fact is proved in Theorem \ref{inv}), hence we have 
\newtheorem*{mcor}{\bf Corollary}
\begin{mcor} 
Let $X$ be a $d$-dimensional projective scheme over a field $k$ of characteristic $p>0$. Let $A$ be as in Open Problem \ref{open1}. Then, $\lambda_{i,j}(A)$ can only achieve finitely many possible values for all choices of
embeddings, for all $0\leq i,j\leq d+1$.
\end{mcor}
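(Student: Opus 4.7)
The plan is to assemble the proof directly from the Main Theorem and Theorem~\ref{inv}, along the lines sketched in the paragraph preceding the corollary. Fix any embedding $\iota\colon X\hookrightarrow\mathbb{P}^n_k$ with homogeneous defining ideal $I\subset R=k[x_0,\dots,x_n]$, and form the module $\cM=\Ext^{n+1-i}_R(\Ext^{n+1-j}_R(R/I,R),R)$ together with its Frobenius action $f$ from Proposition~\ref{f-on-ext}. The Main Theorem yields $\lambda_{i,j}(A)=\dim_k(\cM_s)$, so the task reduces to bounding $\dim_k(\cM_s)$ from above by a quantity that does not depend on $\iota$.

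For the bound, I would verify the inclusion $\cM_s\subseteq\cM_0$. Because $I$ is homogeneous, $\cM$ inherits a natural $\mathbb{Z}$-grading, and the Frobenius satisfies $\deg f(m)=p\deg(m)$, so for every $i\geq 1$ one has $f^i(\cM)\subseteq\bigoplus_{p^i\mid d}\cM_d$. Intersecting over $i$, and using that the only integer divisible by every power of the prime $p$ is zero, gives $\cM_s\subseteq\cM_0$. Since $\cM$ is a finitely generated graded $R$-module, the degree-zero piece $\cM_0$ is a finite-dimensional $k$-vector space, and hence $\lambda_{i,j}(A)=\dim_k(\cM_s)\leq\dim_k(\cM_0)<\infty$.

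Finally, Theorem~\ref{inv} asserts that $\cM_0$ depends only on $X$, $i$, and $j$, not on the embedding $\iota$. Therefore $\dim_k(\cM_0)$ is a fixed non-negative integer attached to the triple $(X,i,j)$, and $\lambda_{i,j}(A)$ is a non-negative integer confined to the finite set $\{0,1,\dots,\dim_k(\cM_0)\}$ for every choice of $\iota$. Applying this argument to each of the finitely many pairs $(i,j)$ with $0\leq i,j\leq d+1$ completes the proof.

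All of the substantive difficulty has already been packaged into the Main Theorem and Theorem~\ref{inv}; the only additional ingredient needed here is the elementary divisibility observation underlying $\cM_s\subseteq\cM_0$. This is precisely where positive characteristic enters in an essential way: it is the $p$-scaling of degrees by the Frobenius, combined with finite generation of $\cM$ in the graded category, that forces stable elements into degree zero and thereby converts the potentially infinite $\dim_k(\cM_s)$ into a quantity controlled by the embedding-invariant $\dim_k(\cM_0)$.
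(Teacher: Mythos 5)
Your proposal is correct and follows exactly the route the paper itself sketches in the paragraph preceding the corollary: apply the Main Theorem to get $\lambda_{i,j}(A)=\dim_k(\cM_s)$, note that the $p$-scaling of degrees under Frobenius forces $\cM_s\subseteq\cM_0$, and then invoke Theorem~\ref{inv} to make $\dim_k(\cM_0)$ an embedding-independent upper bound. Your explicit divisibility argument for $\cM_s\subseteq\cM_0$ fills in a step the paper states tersely, but the logical structure is identical.
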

This corollary provides some strong supporting evidence for a positive answer to Open Problem \ref{open1}. In fact, our Main Theorem reduces Open Prolem \ref{open1} to the following.

\begin{open}
\label{open2}
Let $\cM$ be as above. Is the restriction of the natural action of Frobenius on $\cM$ to $\cM_0$ independent of the embedding?
\end{open}

We believe the answer to Open Problem \ref{open2} is indeed positive, but this remains to be proven. A positive answer to Open Problem \ref{open2} would immediately imply a positive answer to Open Problem \ref{open1} via our Main Theorem. 

This paper is organized as follows: in Section 2, we prove some characteristic-free results, especially, we prove that $\cM_0$ ($\cM$ is as above) does not depend on the embedding; in Section 3, we study modules with actions of Frobenius and draw connections between $\lambda_{i,j}(A)$ and the module $\cM$, especially, we prove the Main Theorem.   
\section{Some characteristic-free results}
In this section we will prove some characteristic-free results which are crucial to the proof of our Main Theorem. The key result is Theorem \ref{inv}, which says, if $X=\Proj(k[x_0,\cdots,x_n]/I)$ under the embedding $\iota:X\hookrightarrow \mathbb{P}^n_k$, where $k$ is a field of any characteristic, then the degree-0 piece of the module $$\cM=\Ext^{n+1-i}_R(\Ext^{n+1-j}_R(R/I,R),R),$$
where $R=k[x_0,\dots,x_n]$, depends only on $X$, but not on the embedding $\iota$.\par
To this end, we need to recall some notations and results from \cite{rd}. A definition of an embeddable morphism is stated on page 189 in \cite{rd}. All we need to know for our purposes is that a smooth morphism is embeddable, a finite morphism is embeddable and a composition of embeddable morphisms is embeddable. 
For an embeddable morphism of locally Noetherian schemes $$f: Y_1\rightarrow Y_2,$$ 
there exists a functor (Theorem 8.7 in Chapter 3 in \cite{rd}) 
$$f^!:\mathcal{D}^+_{qc}(Y_2)\rightarrow \mathcal{D}^+_{qc}(Y_1),$$
(where $\mathcal{D}^+_{qc}(Y)$ denotes the derived category of bounded below complexes of quasi-coherent sheaves of $\mathcal{O}_Y$-modules for any scheme $Y$), satisfying 
\begin{enumerate}
\item if there are two consecutive embeddable morphisms, $Y_1\xrightarrow{f}Y_2\xrightarrow{g}Y_3$, then, by Theorem 8.7(2) in Chapter 3 in \cite{rd}, one has
$$(gf)^!=f^!g^!.$$ 
\item if $f:Y_1\to Y_2$ is a smooth morphism, then 
$$f^!(\mathcal{O}_{Y_2})=\omega_{Y_1/Y_2}$$
by the Remark on page 143 in \cite{rd}.
\item if $f:Y_1\to Y_2$ is a finite morphism, then
$$f^!(\cdot)=\bar{f}^*\underline{\underline{R}}\ \underline{\Hom}_{\mathcal{O}_{Y_2}}(f_*\mathcal{O}_{Y_1}, \cdot),$$
where $\bar{f}$ denotes the induced morphism 
$$(Y_1,\mathcal{O}_{Y_1})\rightarrow (Y_2, f_*\mathcal{O}_{Y_1}),$$
(see Definition on page 165 in \cite{rd} and Theorem 8.7(3) in Chapter 3 in \cite{rd}).
\end{enumerate}

The following theorem should be well-known to experts. It was stated without proof as the last assertion of Proposition 5 in \cite{gr} (the proof of Proposition 5 in \cite{gr} says ``la derni\`ere assertion de la proposition 5 est plus subtile, et ... ne peut \^{e}tre donn\'e ici''). Since we could not find a proof in the literature, we give a proof here.

\begin{thm}
Let $X$ be an arbitrary projective scheme over a field $k$, and $\iota: X\hookrightarrow \mathbb{P}^n_k$ be an embedding. The sheaves
$$\bar{\iota}^*\mathcal{E}xt^i_{\mathcal{O}_{\mathbb{P}^n_k}}(\iota_*\mathcal{O}_X,\omega_{\mathbb{P}^n_k})$$ 
depend only on $X$ and $i$, but not on the embedding $\iota: X\hookrightarrow \mathbb{P}^n_k$.
\end{thm}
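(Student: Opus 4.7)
The plan is to recognize the sheaves in the statement as the cohomology sheaves of the complex $\iota^!(\omega_{\mathbb{P}^n_k})$, and then to use the composition property $(gf)^!=f^!g^!$ from property (1) to rewrite $\iota^!(\omega_{\mathbb{P}^n_k})$ in a form that manifestly depends only on $X$ and not on the embedding $\iota$.

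First I would observe that, since a closed immersion is finite (hence embeddable), property (3) applied to $\iota:X\hookrightarrow \mathbb{P}^n_k$ gives
$$\iota^!(\omega_{\mathbb{P}^n_k})=\bar{\iota}^*\underline{\underline{R}}\ \underline{\Hom}_{\mathcal{O}_{\mathbb{P}^n_k}}(\iota_*\mathcal{O}_X,\omega_{\mathbb{P}^n_k}).$$
Taking the $i$-th cohomology sheaf of both sides identifies the target sheaf $\bar{\iota}^*\mathcal{E}xt^i_{\mathcal{O}_{\mathbb{P}^n_k}}(\iota_*\mathcal{O}_X,\omega_{\mathbb{P}^n_k})$ with $\mathcal{H}^i\bigl(\iota^!(\omega_{\mathbb{P}^n_k})\bigr)$.

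Next I would factor through the structure map of projective space. Let $\pi:\mathbb{P}^n_k\to \Spec k$ denote the (smooth) structure morphism; by property (2), $\pi^!(\mathcal{O}_{\Spec k})=\omega_{\mathbb{P}^n_k}$. Applying property (1) to the composition $X\xrightarrow{\iota}\mathbb{P}^n_k\xrightarrow{\pi}\Spec k$ then yields
$$\iota^!(\omega_{\mathbb{P}^n_k})=\iota^!\pi^!(\mathcal{O}_{\Spec k})=(\pi\iota)^!(\mathcal{O}_{\Spec k}).$$
The right-hand side is constructed from the structure morphism $\pi\iota:X\to\Spec k$ alone, which is intrinsic to $X$ as a $k$-scheme and involves no reference to the embedding. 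Hence its cohomology sheaves depend only on $X$ and the index $i$, and therefore so do the sheaves $\bar{\iota}^*\mathcal{E}xt^i_{\mathcal{O}_{\mathbb{P}^n_k}}(\iota_*\mathcal{O}_X,\omega_{\mathbb{P}^n_k})$ in the statement.

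The main obstacle is making sure that the composition isomorphism $(gf)^!=f^!g^!$ is natural enough that the cohomology sheaves of the two complexes $\iota^!(\omega_{\mathbb{P}^n_k})$ and $(\pi\iota)^!(\mathcal{O}_{\Spec k})$ agree as literal sheaves on $X$, rather than just up to some non-canonical choice of representative; I would also have to be a bit careful about the convention used in \cite{rd} for $\omega_{\mathbb{P}^n_k}$ (as a sheaf versus a shifted dualizing complex), so that the cohomological index $i$ on both sides really matches. Both points are built into the construction of $f^!$ in Theorem 8.7 of Chapter 3 of \cite{rd}, so no substantial new argument should be required and the proof should reduce to an unwinding of these definitions.
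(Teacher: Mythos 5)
Your proof is correct and is essentially the paper's own argument: you invoke the same three properties of $(\cdot)^!$ and reduce to the fact that $(\pi\iota)^!(\mathcal{O}_{\Spec k})$ is intrinsic to $X$; the paper merely writes the chain of identifications starting from $f^!(\mathcal{O}_{\Spec k})$ rather than from $\iota^!(\omega_{\mathbb{P}^n_k})$. The one step worth making explicit (which the paper does) is that $\bar{\iota}^*$ is exact, so taking cohomology sheaves of $\bar{\iota}^*\underline{\underline{R}}\,\underline{\Hom}(\iota_*\mathcal{O}_X,\omega_{\mathbb{P}^n_k})$ really does yield $\bar{\iota}^*\mathcal{E}xt^i_{\mathcal{O}_{\mathbb{P}^n_k}}(\iota_*\mathcal{O}_X,\omega_{\mathbb{P}^n_k})$.
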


\begin{proof}[Proof]
Since both $X$ and $\mathbb{P}^n_k$ are schemes over $k$, we have the following commutative diagram
$$\xymatrix{
   X\ar[r]^{\iota} \ar[dr]^f & \mathbb{P}^n_k\ar[d]^g \\
    & \Spec(k)}$$
Since $\iota$ is finite and $g$ is smooth, all morphisms in the diagram are embeddable. Hence, 
\begin{align}
f^!(\mathcal{O}_{Spec(k)}) &\stackrel{{\rm by}\ (1)}{=}\iota^!g^!(\mathcal{O}_{\Spec(k)}) \notag\\
                           &\stackrel{{\rm by}\ (2)}{=}\iota^!(\omega_{\mathbb{P}^n_k})   \notag\\
                           &\stackrel{{\rm by}\ (3)}{=}\bar{\iota}^* \underline{\underline{R}}\ \underline{\Hom}(\iota_*\mathcal{O}_X, \omega_{\mathbb{P}^n_k})\notag
\end{align}
Since $\bar{\iota}^*$ is exact (page 165 in \cite{rd}), the sheaves 
$$\bar{\iota}^*\mathcal{E}xt^i_{\mathbb{P}^n_k}(\iota_*\mathcal{O}_X,\omega_{\mathbb{P}^n_k})$$
are the cohomology sheaves of $\bar{\iota}^* \underline{\underline{R}}\ \underline{\Hom}(\iota_*\mathcal{O}_X, \omega_{\mathbb{P}^n_k})$, 
i.e., the cohomology sheaves of $f^!(\mathcal{O}_{\Spec(k)})$, which do not depend on the embedding $\iota: X\hookrightarrow \mathbb{P}^n_k$.
\end{proof}

In the following remark $M_{(f)}$ denotes the homogeneous localization of a graded modules $M$ with respect to the multiplicative system $\{1,f,f^2,\dots\}$ (i.e., the degree-0 part of $M_f$), where $f$ is a homogeneous element; $\widetilde{M}$ denotes the sheaf on $X$ associated to $M$; and, 
$$\sideset{^*}{_S}\Hom(M,N):=\oplus_n\Hom_S(M,N)_n,$$
where $\Hom_S(M,N)_n$ is the set of homomorphsims of degree $n$ (see \S2 in \cite{ega2} for details; in \cite{ega2} $\sideset{^*}{}\Hom _S(M,N)$ is denoted simply $\Hom_S(M,N)$). If $M$ is finitely generated, $\sideset{^*}{}\Hom _S(M,N)$ coincides with $\Hom_S(M,N)$ in the usual sense. 

\begin{rem}{(2.5.12 in \cite{ega2})}
\label{ega2}
 Let $S$ be a graded noetherian ring, $M$ and $N$ two graded $S$-modules, and $f\in S_d$ ($d>0$).\par
One can define a canonical functorial $S_{(f)}$-modules homomorphism
$$\mu_f: (\sideset{^*}{_S}\Hom(M,N))_{(f)}\rightarrow \Hom_{S_{(f)}}(M_{(f)},N_{(f)}) $$
by sending $u/f^l$, where $u$ is a homomorphism of degree $ld$ to the homomorphism $M_{(f)}\rightarrow N_{(f)}$ which maps 
$x/f^m$ ($x\in M_{md}$) to $u(x)/f^{l+m}$.\par
For $g\in S_e$ ($e>0$), moreover, one has a commutative diagram
$$\begin{CD}
(\sideset{^*}{}\Hom_S(M,N))_{(f)} @>\mu_f>>        \Hom_{S_{(f)}}(M_{(f)},N_{(f)})  \\
      @VVV                                     @VVV                  \\
(\sideset{^*}{}\Hom_S(M,N))_{(fg)} @>\mu_{fg}>>    \Hom_{S_{(fg)}}(M_{(fg)},N_{(fg)})
\end{CD}$$
The vertical arrows are the canonical homomorphisms.\par
Furthermore, these $\mu_f$ define a canonical functorial 
homomorphism of $\mathcal{O}_X$-modules
$$\mu: \widetilde{\sideset{^*}{_S}\Hom(M,N)}\rightarrow \mathcal{H}om_{\mathcal{O}_X}(\tilde{M},\tilde{N}),$$
where $X=\Proj(S)$.
\end{rem}

We denote by $S_+$ the ideal of $S$ generated by the elements of positive degrees.
\begin{pro}{(Proposition 2.5.13 in \cite{ega2})}
 Suppose, for a graded noetherian ring $S$, the ideal $S_+$ is generated by $S_1$. Then 
$$\mu: \widetilde{\sideset{^*}{_S}\Hom(M,N)}\rightarrow \mathcal{H}om_{\mathcal{O}_X}(\tilde{M},\tilde{N})$$
is an isomorphism when $M$ is finitely generated and $X=\Proj(S)$. 
\end{pro}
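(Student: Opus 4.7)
The plan is to reduce, via the affine cover $\{D_+(f)\}_{f\in S_1}$ of $X$ together with a finite presentation of $M$, to the trivial case $M=S(d)$, and then verify $\mu_f$ by direct computation. First, because $S_+$ is generated by $S_1$, the open sets $D_+(f)$ with $f\in S_1$ form an open cover of $X=\Proj(S)$, and an isomorphism of sheaves on $X$ may be checked on such a cover; it therefore suffices to show that for each $f\in S_1$ the map
$$\mu_f\colon (\sideset{^*}{_S}\Hom(M,N))_{(f)}\longrightarrow \Hom_{S_{(f)}}(M_{(f)},N_{(f)})$$
from Remark \ref{ega2} is an isomorphism of $S_{(f)}$-modules.

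Second, since $S$ is noetherian and $M$ is finitely generated, I fix a presentation $F_1\to F_0\to M\to 0$ with each $F_i$ a finite direct sum of twists $S(d_{ij})$. Applying the left-exact functor $\sideset{^*}{_S}\Hom(-,N)$ (which agrees with ordinary $\Hom$ on finitely generated modules) and then the exact functor $(-)_{(f)}$ gives a left-exact sequence; applying $(-)_{(f)}$ to the presentation first and then $\Hom_{S_{(f)}}(-,N_{(f)})$ gives a parallel left-exact sequence; the family $\mu_f$ is a natural transformation between them. A standard four-term diagram chase then reduces matters to the case that $M$ is a finite direct sum of twists, and additivity of both functors in the first variable further reduces to the case $M=S(d)$ for a single $d\in\mathbb{Z}$.

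Third, for $M=S(d)$ I compute both sides explicitly. A degree-$n$ graded homomorphism $S(d)\to N$ is determined by the image of the generator $1\in S(d)_{-d}$, which must lie in $N_{n-d}$, so $\sideset{^*}{_S}\Hom(S(d),N)\cong N(-d)$ canonically. On the right-hand side, because $f\in S_1$ is a unit in $S_f$, multiplication by $f^d$ gives an $S_{(f)}$-module isomorphism $S_{(f)}\xrightarrow{\sim}S(d)_{(f)}$, whence $\Hom_{S_{(f)}}(S(d)_{(f)},N_{(f)})\cong N_{(f)}$ via evaluation at $f^d$. The explicit formula for $\mu_f$ sends $\bar{u}/f^l\in (N(-d))_{(f)}$ with $\bar{u}\in N_{l-d}$ to the homomorphism carrying $f^d$ to $f^d\bar{u}/f^l\in N_{(f)}$, which matches the canonical identification $(N(-d))_{(f)}\cong N_{(f)}$ given by multiplication by $f^d$; hence $\mu_f$ is an isomorphism.

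The main obstacle is bookkeeping rather than a deep point. The hypotheses are used precisely in the reductions: finite generation of $M$ together with noetherianity of $S$ yields a presentation by finite sums of twists $S(d)$, and the assumption $S_+=(S_1)$ guarantees that each such twist becomes trivial after localizing at a single degree-one element $f\in S_1$, which is exactly what makes the explicit computation in the last step work.
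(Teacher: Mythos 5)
The paper does not actually prove this proposition; it is quoted verbatim from EGA~II (Proposition~2.5.13), so there is no in-paper argument to compare against. Your proof is correct and is, in substance, the standard one: you reduce to showing each $\mu_f$ ($f\in S_1$) is an isomorphism using the cover $\{D_+(f)\}_{f\in S_1}$ (this is where $S_+=(S_1)$ enters, together with the observation that $\mathcal{H}om_{\mathcal{O}_X}(\tilde M,\tilde N)|_{D_+(f)}\cong\widetilde{\Hom_{S_{(f)}}(M_{(f)},N_{(f)})}$ because $M_{(f)}$ is finitely presented over the noetherian ring $S_{(f)}$); then reduce to a finite sum of twists via a presentation, left-exactness of $\Hom$ and exactness of $(-)_{(f)}$, and a four-term diagram chase; then reduce to a single $S(d)$ by additivity. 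The last step is the crux, and you correctly note that $S_+=(S_1)$ is used again there: it is only because $f$ has degree one that $f^d\in S(d)_0$ gives a free generator of $S(d)_{(f)}$ over $S_{(f)}$, so that $\mu_f$ becomes the canonical isomorphism $(N(-d))_{(f)}\cong N_{(f)}$; your explicit computation of this identification is right.
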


The following proposition should be well-known to experts. Since we could not find a proof in the literature, we include a proof here.

\begin{pro}
\label{sheaf-ext}
Let $S$ be a graded Noetherian ring. Suppose that $S_+$ is generated by $S_1$. Let $M$ and $N$ be two graded $S$-modules. Then the homomorphism $\mu$ in Remark \ref{ega2} induces an isomorphism
$$\mu: \widetilde{\sideset{^*}{^i_S}\Ext(M,N)}\xrightarrow{\cong}\mathcal{E}xt^i_{\mathcal{O}_X}(\tilde{M},\tilde{N}),$$
when $M$ is finitely generated and $X=\Proj(S)$.
\end{pro}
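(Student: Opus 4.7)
The plan is to reduce the statement to the $\Hom$ case already handled in the preceding proposition by resolving $M$ and then invoking exactness of sheafification.

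First I would choose a resolution $F_\bullet \to M$ of $M$ by finitely generated graded free $S$-modules; this exists because $S$ is Noetherian and $M$ is finitely generated, so each syzygy module is again finitely generated. By definition, the graded Ext is computed as
\[
\sideset{^*}{^i_S}\Ext(M,N)=H^i\bigl(\sideset{^*}{_S}\Hom(F_\bullet,N)\bigr).
\]
Sheafification is an exact functor from graded $S$-modules to $\mathcal{O}_X$-modules, and it commutes with finite direct sums; in particular $\widetilde{F_\bullet}\to\tilde{M}$ is a resolution of $\tilde{M}$ whose terms are finite direct sums of twists $\mathcal{O}_X(n)$, hence locally free of finite rank. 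Moreover, passing to the cohomology of a complex commutes with the exact functor of sheafification, so
\[
\widetilde{\sideset{^*}{^i_S}\Ext(M,N)}\;\cong\;H^i\bigl(\widetilde{\sideset{^*}{_S}\Hom(F_\bullet,N)}\bigr).
\]

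Next I would apply the previous proposition termwise. Since each $F_p$ is finitely generated, the natural map $\mu$ gives an isomorphism
\[
\widetilde{\sideset{^*}{_S}\Hom(F_p,N)}\;\xrightarrow{\ \cong\ }\;\mathcal{H}om_{\mathcal{O}_X}(\widetilde{F_p},\tilde{N}).
\]
The naturality statement in Remark \ref{ega2} (functoriality of $\mu$ in $M$) means these isomorphisms assemble into an isomorphism of complexes
\[
\widetilde{\sideset{^*}{_S}\Hom(F_\bullet,N)}\;\xrightarrow{\ \cong\ }\;\mathcal{H}om_{\mathcal{O}_X}(\widetilde{F_\bullet},\tilde{N}).
\]
Taking cohomology yields
\[
\widetilde{\sideset{^*}{^i_S}\Ext(M,N)}\;\cong\;H^i\bigl(\mathcal{H}om_{\mathcal{O}_X}(\widetilde{F_\bullet},\tilde{N})\bigr).
\]

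The last step is to identify the right-hand side with $\mathcal{E}xt^i_{\mathcal{O}_X}(\tilde{M},\tilde{N})$. This is the standard fact that sheaf $\mathcal{E}xt$ is computed by any resolution of the first argument whose terms are acyclic for $\mathcal{H}om(-,\tilde{N})$; since each $\widetilde{F_p}$ is locally free of finite rank, the functor $\mathcal{H}om_{\mathcal{O}_X}(\widetilde{F_p},-)$ is exact, so the argument is the usual one via the double complex $\mathcal{H}om_{\mathcal{O}_X}(\widetilde{F_\bullet},\mathcal{I}^\bullet)$ for an injective resolution $\tilde{N}\to\mathcal{I}^\bullet$ and the two hypercohomology spectral sequences. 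The most delicate point is really the naturality in $M$ of the map $\mu_f$ from Remark \ref{ega2}, needed to promote the termwise isomorphisms to a map of complexes; but this follows directly from the explicit formula $u/f^l\mapsto(x/f^m\mapsto u(x)/f^{l+m})$, which is visibly functorial in $M$. Combining all three displays produces the required isomorphism $\mu\colon\widetilde{\sideset{^*}{^i_S}\Ext(M,N)}\xrightarrow{\cong}\mathcal{E}xt^i_{\mathcal{O}_X}(\tilde{M},\tilde{N})$.
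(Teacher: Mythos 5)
Your proof is correct and follows essentially the same route as the paper: resolve $M$ by finitely generated graded free modules, sheafify (using exactness to commute with cohomology), apply the preceding $\Hom$-level proposition termwise, and invoke functoriality of $\mu$ to get an isomorphism of complexes, then identify sheaf $\mathcal{E}xt$ as the cohomology of $\mathcal{H}om_{\mathcal{O}_X}(\widetilde{F}_\bullet,\tilde{N})$. The only cosmetic difference is that the paper cites Hartshorne III.6.5 for this last identification, whereas you sketch the hypercohomology/double-complex argument behind it.
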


\begin{proof}[Proof]
$M$ has a resolution with degree-preserving differentials 
$$\cdots\rightarrow F_r\xrightarrow{d_r}F_{r-1}\xrightarrow{d_{r-1}}\cdots\xrightarrow{d_2}F_1\xrightarrow{d_1}F_0\xrightarrow{d_0}M\rightarrow 0,$$
in which all $F_i$ are finitely generated graded free $S$-modules. Then we have an induced locally-free resolution for $\tilde{M}$
$$\cdots\rightarrow \tilde{F_r}\xrightarrow{\tilde{d}_r}\tilde{F}_{r-1}\xrightarrow{\tilde{d}_{r-1}}\cdots\xrightarrow{\tilde{d_2}}\tilde{F_1}\xrightarrow{\tilde{d_1}}\tilde{F_0}\xrightarrow{\tilde{d_0}}\tilde{M}\rightarrow 0,$$
where $\tilde{d}_i: \tilde{F}_i\rightarrow \tilde{F}_{i-1}$ is defined by
$$\tilde{d}_i|_{D_+(f)}:= d_{i(f)}: F_{i(f)}\rightarrow F_{i-1(f)}$$
since $$\tilde{F}_i(D_+(f))=F_{i(f)}\ {\rm and}\  \tilde{F}_{i-1}(D_+(f))=F_{i-1(f)}.$$
According to Proposition 6.5 in Chapter 3 in \cite{ag}, $\mathcal{E}xt^i_{\mathcal{O}_X}(\tilde{M},\tilde{N})$ is the $i$-th cohomology sheaf of the complex
\begin{equation}
\label{dag}
0\rightarrow \mathcal{H}om_{\mathcal{O}_X}(\tilde{M},\tilde{N})\xrightarrow{\partial_0}\mathcal{H}om_{\mathcal{O}_X}(\tilde{F_0},\tilde{N})\xrightarrow{\partial_1}\cdots\xrightarrow{\partial_r}
\mathcal{H}om_{\mathcal{O}_X}(\tilde{F_r},\tilde{N})\rightarrow \cdots,
\end{equation}
where $\partial_i$ is defined by 
$$\partial_i(U)(\phi)=\phi\circ\tilde{d}_i(U),\ \phi\in \mathcal{H}om_{\mathcal{O}_X}(\tilde{F}_{i-1},\tilde{N})(U)\ {\rm for\ an\ open\ subset}\ U.$$
Applying $\sideset{^*}{_S}\Hom(\cdot, N)$ to the resolution of $M$, we have
$$\cdots\rightarrow \sideset{^*}{_S}\Hom(M,N)\xrightarrow{\delta_0}\sideset{^*}{_S}\Hom(F_0,N)\xrightarrow{\delta_1}\cdots\xrightarrow{\delta_r}\sideset{^*}{_S}\Hom(F_r,N)\rightarrow\cdots,$$
where $\delta_i$ is defined by $$\delta_i(\varphi)=\varphi\circ d_i,\ \varphi\in \sideset{^*}{_S}\Hom(F_i,N).$$ 
This induces
\begin{equation}
\label{ddag}
0\rightarrow \widetilde{\sideset{^*}{_S}\Hom(M,N)}\xrightarrow{\widetilde{\delta_0}}\widetilde{\sideset{^*}{_S}\Hom(F_0,N)}\cdots\xrightarrow{\widetilde{\delta_r}}\widetilde{\sideset{^*}{_S}\Hom(F_r,N)}\rightarrow\cdots.
\end{equation}
Since $\widetilde{\cdot}$ is exact, 
$$\widetilde{\sideset{^*}{^i_S}\Ext(M,N)}=H^i(0\rightarrow \widetilde{\sideset{^*}{_S}\Hom(M,N)}\xrightarrow{\widetilde{\delta_0}}\widetilde{\sideset{^*}{_S}\Hom(F_0,N)}\cdots\xrightarrow{\widetilde{\delta_r}}\widetilde{\sideset{^*}{_S}\Hom(F_r,N)}\rightarrow\cdots)$$
Since the homomorphism $\mu:\widetilde{\sideset{^*}{_S}\Hom(\cdot,N)}\to \mathcal{H}om_{\mathcal{O}_X}(\tilde{\cdot},\tilde{N})$ is functorial, we have a commutative diagram
$$\begin{CD}
0 @>>>  \widetilde{\sideset{^*}{_S}\Hom(F_0,N)}  @>>> \widetilde{\sideset{^*}{_S}\Hom(F_1,N)} @>>> \cdots \\
@.       @V{\mu}VV                                          @V{\mu}VV \\
0 @>>>  \mathcal{H}om_{\mathcal{O}_X}(\tilde{F}_0,\tilde{N})  @>>> \mathcal{H}om_{\mathcal{O}_X}(\tilde{F}_1,\tilde{N}) @>>> \cdots
\end{CD}$$
in which the vertical arrows are isomorphisms. Therefore, the induced maps on homology are isomorphisms.
\end{proof}

Consider $R=k[x_0,\dots,x_n]$ as a graded ring with the standard grading. Let $N_1$ and $N_2$ be graded $R$-modules. The grading on $\sideset{^*}{}\Hom_R(N_1,N_2)$ is given by $\deg(\phi)=l$ for $\phi\in \Hom_R(N_1,N_2)_l$. This induces a grading on $\sideset{^*}{^t_R}\Ext(N_1,N_2)$ for all integers $t$. When $N_1$ is a finitely generated graded $R$-module, one has $\sideset{^*}{}\Hom_R(N_1,N_2)=\Hom_R(N_1,N_2),$
and hence 
$\sideset{^*}{^t_R}\Ext(N_1,N_2)=\Ext^t_R(N_1,N_2)$. 
Therefore, when $N_1$ is finitely generated, we will not distinguish $\sideset{^*}{}\Ext^t_R(N_1,N_2)$ and $\Ext^t_R(N_1,N_2)$, and just write $\Ext^t_R(N_1,N_2)$ (with the same grading on $\sideset{^*}{^t_R}\Ext(N_1,N_2)$ kept in mind). In particular, in what follows, we will write $$\Ext^{n+1-j}_R(R/I,R)$$ and $$\Ext^{n+1-i}_R(\Ext^{n+1-j}_R(R/I,R),R)$$ with $^*$ dropped.

\begin{thm} 
\label{inv}
Let $R=k[x_0,\dots,x_n]$ be the polynomial ring with the standard grading and $I$ be the defining ideal of a projective scheme $X$ under the embedding $\iota: X\hookrightarrow \mathbb{P}^n_k$. Set $\mathfrak{m}=(x_0,\dots,x_n)$ and
$$\cM=\Ext^{n+1-i}_R(\Ext^{n+1-j}_R(R/I,R),R).$$
Then $\cM_0$, the degree-0 piece of $\cM$, depends only on $X$, $i$, and $j$, but not on the embedding $\iota$. 
\end{thm}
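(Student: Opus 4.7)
The plan is to express $\cM_0$ as an invariant of $X$ by combining three tools: graded local duality over the polynomial ring $R=k[x_0,\dots,x_n]$ (whose graded canonical module is $\omega_R=R(-n-1)$), the Serre correspondence between $\mathfrak{m}$-local cohomology of a graded $R$-module and sheaf cohomology on $\mathbb{P}^n_k$, Proposition \ref{sheaf-ext}, and the preceding theorem on the embedding-independence of $\bar\iota^*\mathcal{E}xt^\bullet_{\mathcal{O}_{\mathbb{P}^n_k}}(\iota_*\mathcal{O}_X,\omega_{\mathbb{P}^n_k})$.

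Setting $E:=\Ext^{n+1-j}_R(R/I,R)$, graded local duality applied to $E$ in homological degree $n+1-i$ gives
\[
\cM_0\;=\;\Ext^{n+1-i}_R(E,R)_0\;\cong\;\Hom_k\bigl(H^i_\mathfrak{m}(E)_{-n-1},\,k\bigr),
\]
so it suffices to show that the $k$-vector space $H^i_\mathfrak{m}(E)_{-n-1}$ is intrinsic to $X$. In the main range $i\geq 2$, the Serre correspondence yields $H^i_\mathfrak{m}(E)_{-n-1}\cong H^{i-1}(\mathbb{P}^n_k,\widetilde E(-n-1))$, and Proposition \ref{sheaf-ext} identifies $\widetilde E(-n-1)$ with $\mathcal{E}xt^{n+1-j}_{\mathcal{O}_{\mathbb{P}^n_k}}(\iota_*\mathcal{O}_X,\omega_{\mathbb{P}^n_k})$, a sheaf supported on $X$. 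Since $\iota$ is a closed immersion, $\iota_*$ is exact and preserves cohomology, so
\[
H^{i-1}\!\bigl(\mathbb{P}^n_k,\widetilde E(-n-1)\bigr)\;\cong\;H^{i-1}\!\Bigl(X,\,\bar\iota^*\mathcal{E}xt^{n+1-j}_{\mathcal{O}_{\mathbb{P}^n_k}}(\iota_*\mathcal{O}_X,\omega_{\mathbb{P}^n_k})\Bigr),
\]
and the preceding theorem shows that the sheaf on the right depends only on $X$ and $j$.

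For the boundary indices $i\in\{0,1\}$, the Serre correspondence is replaced by the four-term exact sequence
\[
0\to H^0_\mathfrak{m}(E)_{-n-1}\to E_{-n-1}\to H^0(\mathbb{P}^n_k,\widetilde E(-n-1))\to H^1_\mathfrak{m}(E)_{-n-1}\to 0.
\]
The third term is invariant by the argument above. I would handle the second term by iterating the strategy one level down: graded local duality for $R/I$ gives $E_{-n-1}\cong\Hom_k(H^j_\mathfrak{m}(R/I)_0,k)$, and the Serre correspondence for $R/I$ identifies $H^j_\mathfrak{m}(R/I)_0$ with the intrinsic $k$-vector space $H^{j-1}(X,\mathcal{O}_X)$ when $j\geq 2$, with analogous intrinsic descriptions when $j\in\{0,1\}$. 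Once the three middle entries of the four-term sequence are intrinsic, so are the two extreme terms $H^0_\mathfrak{m}(E)_{-n-1}$ and $H^1_\mathfrak{m}(E)_{-n-1}$.

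The main anticipated obstacle is verifying that the comparison maps entering the four-term sequences (both for $E$ and, in the iteration, for $R/I$) are canonical with respect to the embedding-independent identifications supplied by the preceding theorem, not merely isomorphisms of their domains and codomains in the abstract. The doubly low-degree corner, where both $i$ and $j$ lie in $\{0,1\}$, looks the most delicate, since each $H^0_\mathfrak{m}$ term must then be matched via the iterated strategy all the way down.
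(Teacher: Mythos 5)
Your plan reproduces the paper's proof essentially verbatim: graded local duality reduces $\cM_0$ to $H^i_\mathfrak{m}(E)_{-n-1}$ (equivalently $H^i_\mathfrak{m}(T)_0$ with $T:=E(-n-1)$), the Serre correspondence plus Proposition~\ref{sheaf-ext} and Theorem~2.1 handle $i\geq 2$, and for $i\in\{0,1\}$ one uses the four-term sequence with the middle term $E_{-n-1}$ handled by iterating the argument one level down for $R/I$ (Serre for $j\geq 2$, a second four-term sequence for $j\in\{0,1\}$). The one thing you leave as an ``anticipated obstacle'' --- that the map $\varphi_0$ in the four-term sequences must be canonical, not just an abstract morphism between intrinsically-described spaces --- is indeed the only remaining point, and the paper closes it cheaply: $\varphi_0$ is by construction the map sending a degree-zero element to its associated global section. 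For $R/I$ this is explicitly $c\in k=(R/I)_0\mapsto$ the locally constant function equal to $c$ on each connected component of $X$, which visibly does not depend on the embedding; for $T$ it is the analogous evaluation $T_0\to H^0\bigl(X,\bar\iota^*\mathcal{E}xt^{n+1-j}_{\mathcal{O}_{\mathbb{P}^n_k}}(\iota_*\mathcal{O}_X,\omega_{\mathbb{P}^n_k})\bigr)$, again manifestly intrinsic once source and target are identified via the claim and Theorem~2.1. With that observation your ``doubly low-degree corner'' $i,j\in\{0,1\}$ poses no additional difficulty, since the two $\varphi_0$'s in question are handled by the same one-line description.
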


\begin{proof}[Proof]
First we treat the case when $i\geq 2$.\\
Since $\omega_{\mathbb{P}^n_k}=\widetilde{R(-n-1)}$ and
$\iota_*\mathcal{O}_X=\widetilde{R/I}$, we have 
\begin{align}
\Ext^{n+1-i}_R(\Ext^{n+1-j}_R(R/I,R),R)_0
&= (\Ext^{n+1-i}_R(\Ext^{n+1-j}_R(R/I,R),R)(n+1-(n+1)))_0\notag\\
&= (\Ext^{n+1-i}_R(\Ext^{n+1-j}_R(R/I,R(-n-1)),R(-n-1)))_0\tag{i}\\
&\cong (\Hom_k(H^i_{\mathfrak{m}}(\Ext^{n+1-j}_R(R/I,R(-n-1)))_0,k))\tag{ii}\\
&\cong
\Hom_k(H^{i-1}(\mathbb{P}^n_k,\mathcal{E}xt^{n+1-j}_{\mathcal{O}_X}(\iota_*\mathcal{O}_X,\omega_{\mathbb{P}^n_k})),k)\tag{iii}\\
&\cong
\Hom_k(H^{i-1}(\mathbb{P}^n_k,\iota_*\bar{\iota}^*\mathcal{E}xt^{n+1-j}_{\mathcal{O}_X}(\iota_*\mathcal{O}_X,\omega_{\mathbb{P}^n_k})),k)\tag{iv}\\
&\cong
\Hom_k(H^{i-1}(X,\bar{\iota}^*\mathcal{E}xt^{n+1-j}_{\mathbb{P}^n_k}(\iota_*\mathcal{O}_X,\omega_{\mathbb{P}^n_k})),k)\tag{v} 
\end{align}
(i) holds because the graded modules $\Hom_R(N_1(-l),N_2)$, $\Hom_R(N_1,N_2)(l)$ and $\Hom_R(N_1,N_2(l))$ are all equal
for graded $R$-modules $N_1,N_2$ and integers $l$, hence
$\Ext^t_R(N_1(-l),N_2)=\Ext^t_R(N_1,N_2)(l)=\Ext^t_R(N_1,N_2(l))$ (13.1.9 in \cite{bs}).\\
(ii) follows from the Graded Local Duality for polynomial rings over a field (13.4.6 in \cite{bs}).\\
(iii) is a consequence of the fact that, for any graded $R$-module $M$,
$$(H^i_{\mathfrak{m}}(M))_0\cong H^{i-1}(\mathbb{P}^n_k,\tilde{M}),\ {\rm for}\ i\geq 2$$ (cf. Theorem A4.1
 in \cite{e}). It is here that we use $i\geq 2$.\\ 
(iv) holds because $\bar{\iota}^*$ is the restriction of $\iota^*$ to the subcategory of sheaves of $\iota_*\mathcal{O}_X$-modules, $\mathcal{E}xt^{n+1-j}_{\mathbb{P}^n_k}(\iota_*\mathcal{O}_X,\omega_{\mathbb{P}^n_k})$ is a sheaf of $\iota_*\mathcal{O}_X$-modules and $\iota_*\iota^*(\mathcal{F})=\mathcal{F}$ for any sheaf of $\iota_*\mathcal{O}_X$-module $\mathcal{F}$.\\
(v) follows from the fact that $\iota$ is a finite morphism.\\
This completes the proof in the case that $i\geq 2$. Next we treat the case when $i=0,1$.\par
First, we claim that $\Ext^{n+1-j}_R(R/I,R(-n-1))_0$ does not depend on the embedding. When $j\geq 2$, we reason as follows.\par
$\Ext^{n+1-j}_R(R/I,R(-n-1))_0$\\
$\cong \Hom_k(H^j_{\mathfrak{m}}(R/I),k)_0=\Hom_k(H^j_{\mathfrak{m}}(R/I)_0,k)$, by Graded Local Duality  (13.4.6 in \cite{bs})\\
$\cong \Hom_k(H^{j-1}(\mathbb{P}^n_k,\iota_*\mathcal{O}_X),k)$, by  Theorem A4.1 in \cite{e}\\
$\cong \Hom_k(H^{j-1}(X,\mathcal{O}_X),k)$, because $\iota$ is a finite morphism\\
This proves the claim in the case that $j\geq 2$.\\
When $j=0,1$, consider the exact sequence (cf. Theorem A4.1 in \cite{e})
$$0\to H^0_{\mathfrak{m}}(R/I)\to R/I\xrightarrow{\varphi}\bigoplus_{m\in\mathbb{Z}}H^0(\mathbb{P}^n_k,\widetilde{R/I}(m))\to H^1_{\mathfrak{m}}(R/I)\to 0.$$
The degree-0 piece of $\varphi$ is $\varphi_0:(R/I)_0=k\to H^0(\mathbb{P}^n_k,\widetilde{R/I})=H^0(X,\mathcal{O}_X)$. It is given by sending each element $c\in k=(R/I)_0$ to its associated global section on $X$, which is a string of the same constants $c\in k$, one for each connected component of $X$. Hence $\varphi_0$ does not depend on the embedding. Thus, the kernel and cokernel of $\varphi_0$, i.e., $H^0_{\mathfrak{m}}(R/I)_0$ and $H^1_{\mathfrak{m}}(R/I)_0$, do not depend on the embedding. Therefore, by Graded Local Duality (13.4.6 in \cite{bs})
$$\Ext^{n+1}_R(R/I,R(-n-1))_0\cong \Hom_k(H^0_{\mathfrak{m}}(R/I),k)_0=\Hom_k(H^0_{\mathfrak{m}}(R/I)_0,k)$$
and $$\Ext^n_R(R/I,R(-n-1))_0\cong \Hom_k(H^1_{\mathfrak{m}}(R/I),k)_0=\Hom_k(H^1_{\mathfrak{m}}(R/I)_0,k)$$
do not depend on the embedding. This completes the proof of the claim.\par
Now, let $T$ denote the module $\Ext^{n+1-j}_R(R/I,R(-n-1))$, for $0\leq j\leq d+1$. Again, by Theorem A4.1 in \cite{e}, we have an exact sequence
$$0\to H^0_{\mathfrak{m}}(T)\to T\xrightarrow{\varphi} \bigoplus_{m\in\mathbb{Z}}H^0(\mathbb{P}^n_k,\widetilde{T}(m))\to H^1_{\mathfrak{m}}(T)\to 0.$$
Notice that $\widetilde{T}=\iota_*\bar{\iota}^*\mathcal{E}xt^{n+1-j}_{\mathcal{O}_X}(\iota_*\mathcal{O}_X,\omega_{\mathbb{P}^n_k})$ by Proposition \ref{sheaf-ext}. Since $\iota$ is a finite morphism (a closed embedding), $H^0(\mathbb{P}^n_k,\widetilde{T})= H^0(X,\bar{\iota}^*\mathcal{E}xt^{n+1-j}_{\mathbb{P}^n_k}(\iota_*\mathcal{O}_X,\omega_{\mathbb{P}^n_k}))$, which is independent of the embedding by Theorem 2.1. By the claim, $T_0$ does not depend on the embedding. The degree-0 piece of the map $\varphi$ is  
$$\varphi_0:T_0\to H^0(\mathbb{P}^n_k,\widetilde{T})= H^0(X,\bar{\iota}^*\mathcal{E}xt^{n+1-j}_{\mathbb{P}^n_k}(\iota_*\mathcal{O}_X,\omega_{\mathbb{P}^n_k})),$$ 
which is given by sending each element of $T_0$ to its associated global section on $X$, hence $\varphi_0$ does not depend on the embedding. Thus, the kernel and cokernel of $\varphi_0$, i.e., $H^0_{\mathfrak{m}}(T)_0$ and $H^1_{\mathfrak{m}}(T)_0$, do not depend on the embedding. Therefore, 
$$\Ext^{n+1}_R(\Ext^{n+1-j}_R(R/I,R),R)_0\cong \Hom_k(H^0_{\mathfrak{m}}(T),k)_0$$
and $$\Ext^n_R(\Ext^{n+1-j}_R(R/I,R),R)_0\cong \Hom_k(H^1_{\mathfrak{m}}(T),k)_0$$
do not depend on the embedding. 
\end{proof}

\section{ The proof of the main theorem}
Throughout this section, $R$ will denote the polynomial ring $k[x_0,\dots,x_n]$ where $k$ is of characteristic $p>0$. When $R$ is considered a graded ring, the grading is always the standard one, i.e., $\deg(x_i)=1$ and $\deg(a)=0$ for all $a\in k$. By an $R\{f\}$-module $M$, we mean an $R$-module $M$ equipped with an action of Frobenius, i.e. a map of abelian groups $f: M\to M$, such that $f(rm)=r^pf(m)$, for all $r\in R,m\in M$ ($f$ is also called a $p$-linear endomorphism on $M$). \par

Since any field extension will not affect Lyubeznik numbers (i.e., if $I\subset k[x_0,\dots,x_n]$ is an ideal, $K$ is a field extension of $k$ and $A_K=(K[x_0,\cdots,x_n]/IK[x_0,\cdots,x_n])_{(x_0,\dots,x_n)}$, then $\lambda_{ij}(A)= \lambda_{ij}(A_K)$), we may and we do assume that the underlying field $k$ is algebraically closed.\par
First, we collect some material (from Section 2 in \cite{l5}) about the Frobenius functor
$$F:R\text{-mod}\to R\text{-mod}.$$
Let $R^{(1)}$ be the additive group of $R$ regarded as an $R$-bimodule with usual left $R$-action and with right $R$-action given by $r'\cdot r=r^pr'$ for all $r\in R,r'\in R^{(1)}$. The Frobenius functor $F$ is defined by 
$$F(M)=R^{(1)}\otimes_RM$$
$$F(M\xrightarrow{\phi}N)=(R^{(1)}\otimes_RM\xrightarrow{id\otimes\phi}R^{(1)}\otimes_RN)$$
for all $R$-modules $M,N$ and all $R$-module homomorphisms $\phi$, where $F(M)$ acquires its $R$-module structure via the left $R$-module structure on $R^{(1)}$. Notice that $F$ is an exact functor by Kunz's theorem (Theorem 2.1 in \cite{kunz}).\par
If $M$ is a free $R$-module with an $R$-basis $\{b_1,b_2,\dots\}$, then the $R$-homomorphism $F(M)\to M$ given by 
$$\sum_i r_i\otimes s_ib_i\mapsto \sum_i r_is^p_ib_i$$
is an isomorphism.\par
For any $R$-module $M$, the isomorphism
$$F(\Hom_R(M,R))\to \Hom_R(F(M),R)$$
is given by $r\otimes \phi\mapsto \psi$ where $\psi$ is defined by $\psi(s\otimes m)=rs\phi(m)^p$ for all $r,s\in R^{(1)},m\in M, \phi\in \Hom_R(M,R)$.\par

\begin{rem}{\bf (grading on $F(M)$)}
\label{grading}
In order to make the isomorphism $F(R)\to R$ (defined above) a degree-preserving $R$-homomorphism, the only grading one can put on $F(R)$ is given by 
$$\deg(r'\otimes r)=\deg(r')+p\deg(r)$$
where $r'\in R^{(1)}$ and $r\in R$ are homogeneous elements. In general, for any graded $R$-module $M$, the grading on $F(M)$ is given by 
$$\deg(r'\otimes m)=\deg(r')+p\deg(m)$$
where $r'\in R^{(1)}$ and $m\in M$ are homogeneous elements. It is easy to check that, under this grading, the isomorphism $F(\Hom_R(M,R))\to \Hom_R(F(M),R)$ (defined above) is degree-preserving, hence, so are induced isomorphisms
$$F(\Ext^i_R(M,R))\to \Ext^i_R(F(M),R).$$ 
In particular, for any homogeneous ideal $I$ of $R$, the isomorphisms $$F(\Ext^i_R(R/I,R))\to \Ext^i_R(F(R/I),R)$$ are degree-preserving. Similarly, the induced isomorphisms
\begin{align}
F(Ext^i_R(\Ext^j(R/I,R),R)) &\to \Ext^i_R(F(\Ext^j_R(R/I,R)),R)\notag \\
                            &\to \Ext^i_R(\Ext^j_R(F(R/I),R),R)\notag
\end{align}
are degree-preserving for all $i,j$.
\end{rem}

Let $S$ be a graded commutative ring, then one can define $\sideset{^*}{}\varprojlim$ as a graded version of $\varprojlim$ of an inverse system of graded $S$-modules as follows. Let $\{M_i,\theta_{ji}:M_j\to M_i\}$ be an iverse system of graded $S$-modules where $\theta_{ji}$ are degree-preserving $S$-module homomorphisms. Define $$(\sideset{^*}{}\varprojlim_iM_i)_n=\varprojlim_i(M_i)_n,$$
where $\varprojlim_i(M_i)_n$ is the inverse limit of the inverse system $\{(M_i)_n,(\theta_{ji})_n:(M_j)_n\to (M_i)_n\}$. Then define
$$\sideset{^*}{}\varprojlim_iM_i=\oplus_{n\in \mathbb{Z}}(\sideset{^*}{}\varprojlim_iM_i)_n.$$
Then $\sideset{^*}{}\varprojlim_iM_i$ is naturally a graded $S$-module.\par
In general, $\sideset{^*}{}\varprojlim$ and $\varprojlim$ are different. 

\begin{ex}
\label{example}
Let $S=k[x_0,\dots,x_n]$ be a polynomial ring in $n+1$ variables over a field $k$ with $\Char(k)=p>0$ and $I$ be a proper homogeneous ideal of $S$. If $M_i=S/I^{[p^i]}$, then 
$$\sideset{^*}{}\varprojlim_iM_i=S,\ {\rm but}\ \varprojlim_iM_i=\hat{S}^I,$$
where $\hat{S}^I$ is the $I$-adic completion of $S$.
\end{ex}
\begin{proof}[Proof]
The second assertion, $\varprojlim_iM_i=\hat{S}^I$, is clear.\par
To prove that $\sideset{^*}{}\varprojlim_iM_i=S$, it suffices to show
$$\varprojlim_i(M_i)_n=S_n$$
for every integer $n\geq 0$.\par
For each $n\geq 0$, there exists an integer $i(n)$ such that $p^j>n$ when $j\geq i(n)$. Then $(M_j)_n=S_n$ for all $j\geq i(n)$, and hence the inverse system $\{(M_i)_n\}$ is indeed the following
\begin{equation}
\label{sn}
(S/I^{[p]})_n\leftarrow (S/I^{[p^2]})_n\leftarrow\cdots\leftarrow S_n\xleftarrow{=}S_n\xleftarrow{=}\cdots
\end{equation}
It is clear that the inverse limit of (\ref{sn}) is $S_n$.
\end{proof}
One may also define $\sideset{^*}{}\varinjlim$ as a graded version of $\varinjlim$ similarly. However, by doing so, one does not get anything new, since (it is eay to check that) $\varinjlim_iM_i$ is naturally graded with respect to the grading on $M_i$'s whenever all $M_i$ are graded and $\theta_{ij}$ are degree-preserving.\par

Let $T$ be a subset of a $R$-module $M$, throughout this section $\langle
T\rangle_R$ will denote the $R$-submodule of $M$ generated by all the elements of $T$.\par
Let $M$ be a graded $R$-module. If the action of Frobenius $f$ on $M$ satisfies $\deg(f(m))=p\deg(m)$, then $f$ sends $M_0$ to itself and 
$$M_s:= \bigcap^{\infty}_{i=1}f^i(M)$$ is contained in $M_0$. If the degree-0 piece $M_0$ of $M$ is a finite $k$-space, so is $M_s$, and hence $N=\langle M_s\rangle_R$ is finitely generated. \par
Let $F$ denote the natural Frobenius functor on the category of $R$-modules. The action of Frobenius $f:M\to M$ on $M$ induces an $R$-module homomorphism $$\beta:F(M)\to M$$ given by $\beta(r\otimes m)=rf(m)$. There results an inverse system
$$M\xleftarrow{\beta} F(M)\xleftarrow{F(\beta)}F^2(M)\leftarrow\cdots\leftarrow F^i(M)\xleftarrow{F^i(\beta)}F^{i+1}(M)\leftarrow\cdots.$$

\begin{lem}
\label{limit}
Let $R=k[x_0,\dots,x_n]$ be the polynomial ring over $k$. Let $M$ be an $R\{f\}$-module and a finitely generated graded $R$-module. Assume that the action of Frobenius $f$ on $M$ satisfies $$\deg(f(m))=p\deg(m),$$
for all homogeneous $m\in M$. Let $N=\langle M_s\rangle_R$, where $M_s$ is the stable part of $M$. Then $N$ is an $R\{f\}$-submodule of $M$. Let $\beta:F(M)\to M$ be the $R$-module homomorphism defined above.  Then $\beta$ is degree-preserving with respect to the grading in Remark \ref{grading} and 
$$\sideset{^*}{}\varprojlim_{i\to\infty} F^i(N)=\sideset{^*}{}\varprojlim_{i\to\infty}F^i(M).$$
\end{lem}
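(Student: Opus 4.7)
The lemma has three assertions: (a) $N$ is closed under $f$; (b) $\beta$ is degree-preserving; (c) the natural map $\sideset{^*}{}\varprojlim_iF^i(N)\to\sideset{^*}{}\varprojlim_iF^i(M)$ induced by $N\hookrightarrow M$ is an isomorphism. I dispose of (a) and (b) first since they are formal. For (a), the definition $M_s=\bigcap_if^i(M)$ immediately gives $f(M_s)\subseteq\bigcap_if^{i+1}(M)=M_s$, and since $f(rm)=r^pf(m)$ we get $f(R\cdot M_s)\subseteq R\cdot M_s=N$. For (b), with the grading $\deg(r\otimes m)=\deg r+p\deg m$ of Remark \ref{grading}, both $r\otimes m$ and $\beta(r\otimes m)=rf(m)$ visibly have degree $\deg r+p\deg m$.

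The heart of the proof is (c). My plan is to consider the quotient $Q=M/N$ and show $\sideset{^*}{}\varprojlim_iF^i(Q)=0$; the conclusion will then follow because the isomorphism will drop out of an exact sequence argument. Concretely, since $F$ is exact (Kunz), the sequence $0\to F^i(N)\to F^i(M)\to F^i(Q)\to 0$ is short exact for each $i$. In every fixed degree $n$ the three terms are finite-dimensional over $k$, because the constraint $\deg r+p^i\deg m=n$ (with $r,m$ homogeneous) allows only finitely many bidegrees; hence each inverse system is a system of finite-dimensional $k$-vector spaces, Mittag--Leffler is automatic, and $\sideset{^*}{}\varprojlim$ is exact on this short exact sequence. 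Once $\sideset{^*}{}\varprojlim_iF^i(Q)=0$ is established, the natural map in (c) is forced to be an isomorphism.

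To prove the vanishing, I first observe that $Q_s=0$: since $\deg f(q)=p\deg q$ any stable element lies in degree $0$, and in $Q_0=M_0/M_s$ the descending chain $f^i(M_0)$ stabilizes inside the finite-dimensional $k$-space $M_0$ to its eventual value $M_s$, so $\bigcap_if^i(Q_0)=0$. Now fix a degree $n$ and a compatible sequence $(\mu_i)$ with $\mu_i\in F^i(Q)_n$. For any $j$ with $p^j>n$, a bidegree count in $R^{(j)}\otimes_RQ$ forces every element of $F^j(Q)_n$ to be of the form $\sum_\alpha r_\alpha\otimes q_\alpha$ with $r_\alpha\in R_n$ and $q_\alpha\in Q_0$. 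Iterating the transition formula $F^{t-1}(\beta)(r\otimes q)=r\otimes f(q)$, obtained from the canonical identification $F^{t-1}(F(Q))\cong F^t(Q)$ via $r\otimes q\mapsto r\otimes(1\otimes q)$, one gets for any $k\leq j$
$$\mu_k=\sum_\alpha r_\alpha\otimes f^{j-k}(q_\alpha)\in F^k(Q)_n.$$
Because $Q_0$ is finite-dimensional and $\bigcap_if^i(Q_0)=0$, for $j-k$ large we have $f^{j-k}(Q_0)=0$ and hence $\mu_k=0$ for every $k$, giving the desired vanishing. The main technical obstacle is the iterated transition formula, which is a direct inductive bookkeeping exercise using the explicit form of $\beta$; the two substantive ingredients are the finite-dimensional-stabilization arguments that produce $Q_s=0$ and $f^{j-k}(Q_0)=0$.
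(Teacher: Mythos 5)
Your overall strategy is the same as the paper's: pass to $Q=M/N$, use exactness of $F$ and Mittag--Leffler on the degreewise-finite pieces to reduce to showing $\sideset{^*}{}\varprojlim_iF^i(Q)=0$, and prove that vanishing by a degree argument. The treatment of (a) and (b) and the transition formula $\mu_k=\sum r_\alpha\otimes f^{j-k}(q_\alpha)$ are all fine, and the identification $f^i(Q_0)=0$ for $i\gg 0$ (via stabilization of the descending chain of $k$-subspaces $f^i(M_0)$, using that $k$ is taken algebraically closed in this section) is the right ingredient.

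However, there is a genuine gap in the bidegree count. From $\deg r_\alpha+p^j\deg q_\alpha=n$ with $\deg r_\alpha\geq 0$ and $p^j>n$ you may only conclude $\deg q_\alpha\leq 0$, \emph{not} $\deg q_\alpha=0$. Nothing in the hypotheses prevents $M$ (hence $Q=M/N$) from having nonzero components in negative degrees — e.g.\ $M=R\oplus R(1)$ with $f$ the Frobenius on the first summand and $0$ on the second gives $Q=R(1)$, which is nonzero in degree $-1$. In that situation $q_\alpha$ can lie in $Q_{<0}$ with $r_\alpha$ of degree $n+p^j\lvert\deg q_\alpha\rvert>n$, and the step ``$f^{j-k}(Q_0)=0\Rightarrow\mu_k=0$'' does not directly apply. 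The fix is exactly what the paper does: since $M/N$ is finitely generated it is bounded below in degree, so there is an integer $l_0$ with $f^i(\bar m)=0$ whenever $\deg\bar m<0$ and $i\geq l_0$ (because $\deg f^i(\bar m)=p^i\deg\bar m\to-\infty$). Replace your claim ``$q_\alpha\in Q_0$'' by ``$q_\alpha\in Q_{\leq 0}$'' and then conclude $\mu_k=0$ from $f^{j-k}(Q_0)=0$ \emph{and} $f^{j-k}(Q_{<0})=0$ for $j-k\geq\max\{t,l_0\}$. With that correction your argument becomes a clean, element-level version of the paper's ``the maps eventually have zero image in each degree'' argument.
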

\begin{proof}[Proof]
If $\sum_ja_jn_j\in N$ where $a_j\in R$ and $n_j\in M_s$, then $f(\sum_ja_jn_j)=\sum_ja^p_jf(n_j)\in N$ since $n_j\in M_s$, hence $N$ is indeed an $R\{f\}$-submodule of $M$. Since $\deg(f(m))=p\deg(m)$, one has 
$$\deg(\beta(r\otimes m))=\deg(rf(m))=\deg(r)+\deg(f(m))=\deg(r)+p\deg(m)=\deg(r\otimes m),$$
i.e. $\beta$ is degree-preserving with respect to the grading in Remark \ref{grading}. And so are the induced homomorphisms $F^i(\beta)$.\par
Since $F$ is exact, the exact sequence of $R\{f\}$-modules
$$0\to N\to M\to M/N\to 0$$
induces an exact sequence of inverse systems
$$\begin{CD}
@.   @AAA     @AAA    @AAA\\
 0   @>>>   F^i(N)_n @>>>  F^i(M)_n @>F^i(\pi)_n>>  F^i(M/N)_n @>>> 0 \\
@.  @AAF^i(\beta|_{F(N)})_nA   @AAF^i(\beta)_nA   @AAF^i(\bar{\beta})_nA\\
 0  @>>>   F^{i+1}(N)_n  @>>>   F^{i+1}(M)_n @>>F^{i+1}(\pi)_n>  F^{i+1}(M/N)_n @>>> 0\\
 @.  @AAA   @AAA   @AAA\\
\end{CD} $$
Since $M$ is a finitely generated graded $R$-module, $M_0$ is a finite $k$-space, hence $M_s$ is also a finite $k$-space. Since $N=\langle M_s\rangle_R$ is finitely generated, so are $F^i(N)$, and hence each $F^i(N)_n$ is a finite $k$-space, so the inverse system 
$\{F^i(N)_n\}$ satisfies the Mittag-Leffler condition (page 191 in \cite{ag}), we have (by Proposition 9.1 on page 192 in \cite{ag}) an exact sequence
$$0\to \varprojlim F^i(N)_n\to \varprojlim F^i(M)_n\to \varprojlim F^i(M/N)_n\to 0$$
for each $n$, which induces another exact sequence
\begin{equation}
\label{ex-seq}
0\to \sideset{^*}{}\varprojlim F^i(N)\to \sideset{^*}{}\varprojlim F^i(M)\to \sideset{^*}{}\varprojlim F^i(M/N)\to 0.
\end{equation}

We claim that there exists an integer $l$ such that $F^i(M/N)\to\cdots\to M/N$ (or briefly, $F^i(M/N)\to M/N$) has 0 image in degrees $\leq 1$ for all $i\geq l$ and we reason as follows. The homomorphism $F^i(M/N)\to M/N$ is given by $r\otimes \bar{m}\mapsto \overline{rf^i(m)}$ for $r\in R^{(i)}\cong R$ and $m\in M$, thus, 
$$\im(F^i(M/N)\to M/N)=\langle f^i(M/N)\rangle_R.$$
Since $M/N$ is finitely generated, there exists an integer $l_0$ such that $f^i(\bar{m})=0$ if $\deg(\bar{m})<0$ and $i\geq l_0$. This tells us that $$\langle f^i(M/N)\rangle_R=\langle f^i((M/N)_{\geq 0})\rangle_R=\langle f^i((M/N)_0)\rangle_R+\langle f^i((M/N)_{>0})\rangle_R,\ {\rm for}\ i\geq l_0.$$
Since $\deg(f(m))=p\deg(m)$, we get that $\langle f^i((M/N)_{>0})\rangle_R\subset (M/N)_{\geq p^i}$ which does not have any element sitting in $(M/N)_{\leq 1}$. Since $k$ is algebraically closed, we have a descending chain of $k$-subspaces
$$f(M_0)\supset f^2(M_0)\supset\cdots\supset f^i(M_0)\supset f^{i+1}(M_0)\supset \cdots$$
Since $M_0$ is a finite $k$-space, this descending chain stabilizes, i.e. there exists an integer $t$ such that 
$$f^t(M_0)=f^{t+1}(M_0)=\cdots.$$
Hence $N=\langle f^t(M_0)\rangle_R$. Thus, if $i\geq t$, then $f^i((M/N)_0)=0$. Therefore, we can pick $l=\max\{l_o,t\}$. This finishes the proof of our claim.\par
Followed directly from our claim, the map $F^{i+j}(M/N)\to\cdots\to F^j(M/N)$ has 0 image in degrees $\leq p^j$ for all $j$ and $i\geq l$. Since the maps in the inverse system are degree-preserving, $\varprojlim_i(F^i(M/N))_n=0$ for all $n$, hence $\sideset{^*}{_i}\varprojlim(F^i(M/N))=0$. By the short exact sequence (\ref{ex-seq}), one has 
$$\sideset{^*}{_i}\varprojlim F^i(M)=\sideset{^*}{_i}\varprojlim F^i(N).$$
 \end{proof}

We need the following result from \cite{lcd}.
\begin{lem}{(Lemma 1.14 in \cite{lcd})}
\label{basis}
Let $k$ be an algebraically closed field of characteristic $p>0$, and let $V$ be
a finite dimensional $k$-vector space with a bijective $p$-linear endomorphism
$f$. Then there is a basis $e_1,\cdots,e_r$ of $V$ with $f(e_i)=e_i$ for each $i$.
\end{lem}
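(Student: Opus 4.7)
The plan is to show that the $\mathbb{F}_p$-subspace $V^f := \{v \in V : f(v) = v\}$ of fixed vectors satisfies $\dim_{\mathbb{F}_p}(V^f) = \dim_k V$; equivalently, the natural map $V^f \otimes_{\mathbb{F}_p} k \to V$, $e \otimes c \mapsto ce$, is an isomorphism. Any $\mathbb{F}_p$-basis of $V^f$ is then the desired $k$-basis of $V$ consisting of fixed vectors.

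First I would verify injectivity of this map. Given a nontrivial $k$-relation $\sum c_i e_i = 0$ among $\mathbb{F}_p$-independent $e_1, \dots, e_m \in V^f$, choose one with the fewest nonzero $c_i$'s and normalize so that $c_1 = 1$. Applying $f$ gives $\sum c_i^p e_i = 0$, and subtracting kills the $e_1$-term, yielding a shorter relation $\sum_{i \ge 2}(c_i - c_i^p) e_i = 0$. By minimality, $c_i^p = c_i$ for all $i$, i.e.\ $c_i \in \mathbb{F}_p$, contradicting $\mathbb{F}_p$-independence.

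The heart of the argument is producing a nonzero fixed vector in any nonzero such $V$. Pick $0 \neq v \in V$ and let $W \subseteq V$ be the $k$-span of $\{f^i(v)\}_{i \ge 0}$, which is finite-dimensional, $f$-stable, and inherits a bijective $p$-linear endomorphism from $f$. Let $d = \dim_k W$, so $v, f(v), \dots, f^{d-1}(v)$ is a basis and $f^d(v) = a_0 v + a_1 f(v) + \cdots + a_{d-1} f^{d-1}(v)$. Since $k$ is perfect and $f|_W$ is surjective, the vectors $f(v), \dots, f^d(v)$ span $W$, forcing $a_0 \neq 0$. Looking for $e = \sum c_i f^i(v)$ with $f(e) = e$ gives the triangular system $c_0 = a_0 c_{d-1}^p$, $c_i = c_{i-1}^p + a_i c_{d-1}^p$ for $1 \le i \le d-2$, and $c_{d-1} = c_{d-2}^p + a_{d-1} c_{d-1}^p$. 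Eliminating $c_0, \dots, c_{d-2}$ from the last equation produces a single polynomial $P(x) \in k[x]$ of degree $p^d$ in $x = c_{d-1}$. Inspecting lowest-order terms shows $P(x) = x - (\text{terms of degree} \ge p)$, so $P(0) = 0$ and $P(x) = x \cdot R(x)$ with $R(0) = 1$ and $\deg R = p^d - 1 \ge 1$. Algebraic closure of $k$ then provides a root of $R$, giving a nonzero $x_0$ with $P(x_0) = 0$ and hence a nonzero fixed vector.

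To finish, let $V_1 \subseteq V$ be the image of $V^f \otimes_{\mathbb{F}_p} k \to V$, an $f$-stable $k$-subspace. If $V_1 \subsetneq V$, then $\bar{V} := V/V_1$ is nonzero and inherits a surjective (hence bijective) $p$-linear endomorphism $\bar f$, so the previous step yields $\bar e \in \bar V$ with $\bar f(\bar e) = \bar e$ and $\bar e \neq 0$. Lift to $e \in V \setminus V_1$; then $f(e) - e \in V_1$, say $f(e) - e = \sum c_i e_i$ with $e_i \in V^f$. Solving the Artin--Schreier equations $d_i^p - d_i = -c_i$ in the algebraically closed $k$ and setting $w = \sum d_i e_i \in V_1$, a direct computation shows $f(e+w) = e+w$, so $e+w$ is a fixed vector outside $V_1$, contradicting the definition of $V_1$. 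Hence $V_1 = V$. The main obstacle is the existence step: tracking the leading and lowest-order coefficients of the reduced polynomial $P$ carefully enough that a \emph{nonzero} root in $k$ is guaranteed; the rest is essentially one-variable Artin--Schreier together with the $\mathbb{F}_p$-Galois descent sketched in paragraph two.
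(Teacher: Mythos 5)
The paper does not prove this lemma; it is cited verbatim as Lemma~1.14 of Hartshorne--Speiser \cite{lcd}, so there is no in-paper argument to compare yours against. Your proof is correct and self-contained, and it follows the standard ``descent along Frobenius'' strategy: show that the $\mathbb{F}_p$-space $V^f$ of fixed vectors base-changes isomorphically to $V$ by (a) proving that $\mathbb{F}_p$-independent fixed vectors remain $k$-independent, (b) producing one nonzero fixed vector in any nonzero such $V$ by restricting to a cyclic $f$-stable subspace and solving the resulting additive ($p$-)polynomial equation, and (c) closing up by passing to $V/V_1$ and correcting a lift with an Artin--Schreier adjustment. A couple of points are left implicit and deserve a word, though none affects correctness: that $f(W)=W$ for the cyclic subspace $W$ (injectivity of $f$ plus perfectness of $k$ makes $f(W)$ a $k$-subspace of $W$ of full dimension --- twist the $k$-structure by Frobenius and count), and likewise that the induced $\bar f$ on $V/V_1$ is well-defined and bijective (one needs $f(V_1)=V_1$, not just $f(V_1)\subseteq V_1$, for the same reason). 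Also the explicit triangular system in step (b) degenerates notationally when $d=1$, but the surviving equation $c_0=a_0c_0^p$ is handled by exactly the same root-finding. The computation that $P(x)$ has lowest-order term $x$ and leading coefficient $-a_0^{p^{d-1}}\neq 0$, so that $R=P/x$ is nonconstant with $R(0)\neq 0$ and hence has a nonzero root, is the right way to guarantee a nonzero solution.
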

\begin{pro}
\label{limit-stable}
Let $M,N$ be as in Lemma \ref{limit}. Let $M_s$ denote the stable part of $M$. Then 
$$\sideset{^*}{}\varprojlim_iF^i(N)=R^s,$$ where
$s=\dim_k(M_s)$.
\end{pro}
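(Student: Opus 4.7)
The plan is to produce an explicit presentation of $N$ whose Frobenius powers have an easily controlled inverse limit. Since $M_s$ is a finite-dimensional $k$-subspace of $M_0$ on which $f$ is surjective, hence bijective, Lemma \ref{basis} supplies a basis $e_1,\dots,e_s$ of $M_s$ satisfying $f(e_i)=e_i$ for each $i$. These $e_i$ are homogeneous of degree $0$, and $N=\langle e_1,\dots,e_s\rangle_R$ by definition. I would form the degree-preserving $R$-module surjection $\phi\colon R^s\twoheadrightarrow N$, $(a_1,\dots,a_s)\mapsto\sum_l a_le_l$, and set $K=\ker\phi$. Because the $e_i$ are $k$-linearly independent in $M$, one has $K_0=0$, so $K$ (finitely generated over the Noetherian ring $R$) admits a finite set of homogeneous generators $k_1,\dots,k_m$ with $d_j:=\deg(k_j)>0$ for every $j$.

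The key structural point is that $K$ is Frobenius-stable in the following precise sense. Under the canonical graded identification $F^i(R^s)\cong R^s$ given by $r\otimes(s_1,\dots,s_s)\mapsto(rs_1^{p^i},\dots,rs_s^{p^i})$ (compatible with Remark \ref{grading}), the submodule $F^i(K)\subseteq F^i(R^s)$ corresponds to the $R$-submodule of $R^s$ generated by the tuples $(k_{j,1}^{p^i},\dots,k_{j,s}^{p^i})$ for $j=1,\dots,m$. The identity $\sum_l a_le_l=0\Rightarrow\sum_l a_l^pe_l=f(\sum_l a_le_l)=0$ then yields the chain of inclusions $F^i(K)\subseteq F^{i-1}(K)\subseteq\cdots\subseteq K$ inside $R^s$. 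Applying the exact functor $F^i$ to $0\to K\to R^s\to N\to 0$ gives $F^i(N)\cong R^s/F^i(K)$ as graded $R$-modules, and tracing $\beta(r\otimes\sum s_le_l)=\sum rs_l^pe_l$ through these identifications shows that the transition map $F^i(\beta)\colon F^{i+1}(N)\to F^i(N)$ becomes precisely the natural quotient $R^s/F^{i+1}(K)\twoheadrightarrow R^s/F^i(K)$ induced by the identity of $R^s$.

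The computation is then finished degree by degree. Since $F^i(K)$ is generated by elements of degree $p^i d_j$, we have $F^i(K)\subseteq\bigoplus_{n\geq p^i\min_j d_j}(R^s)_n$; since each $d_j>0$, for every fixed $n$ and all sufficiently large $i$ one has $F^i(K)_n=0$. Hence the inverse system $(F^i(N))_n=(R^s)_n/F^i(K)_n$ stabilizes at $(R^s)_n$, and summing over $n$ yields $\sideset{^*}{}\varprojlim_i F^i(N)=R^s$. I expect the principal obstacle to be the grading bookkeeping: keeping straight the multiply-by-$p$ grading on $F$ from Remark \ref{grading}, and verifying carefully that under the canonical identification $F^i(R^s)\cong R^s$ the map $F^i(\beta)$ truly becomes the identity-induced quotient, rather than something that merely commutes with the identity up to a Frobenius twist of coefficients.
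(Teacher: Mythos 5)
Your proof is correct, and it takes a genuinely different route from the paper's. The paper's argument is by induction on $s$: the base case $s=1$ reduces (via $N\cong R/I$ and $F^i(N)\cong R/I^{[p^i]}$) to Example \ref{example}, and the inductive step uses a short exact sequence $0\to N'\to N\to N/N'\to 0$ together with the Mittag-Leffler criterion and projectivity of $R^{s-1}$ to split the resulting sequence of graded inverse limits. Your argument instead presents $N$ once and for all as $R^s/K$ with all relations $K$ concentrated in strictly positive degrees (using $K_0=0$, which follows from the $k$-linear independence of the fixed basis from Lemma \ref{basis}), identifies $F^i(N)\cong R^s/F^i(K)$ with transition maps given by the identity of $R^s$ along the chain $\cdots\subseteq F^{i+1}(K)\subseteq F^i(K)\subseteq\cdots\subseteq K$, and then observes that $F^i(K)_n=0$ for $i\gg 0$ in each fixed degree $n$, so each degree of the graded inverse system stabilizes at $(R^s)_n$. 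In effect you generalize the degree-by-degree stabilization argument of Example \ref{example} from $R/I$ directly to $R^s/K$, bypassing both the induction and the Mittag-Leffler/splitting machinery; this is more direct and makes the resulting isomorphism $\sideset{^*}{}\varprojlim_i F^i(N)\cong R^s$ explicit. The one technical point you flag as a potential obstacle --- checking that $F^i(\beta)$ really becomes the identity-induced quotient $R^s/F^{i+1}(K)\twoheadrightarrow R^s/F^i(K)$ --- does go through as you describe: since $\beta(r\otimes\sum_l v_le_l)=\sum_l rv_l^pe_l=\phi(rv_1^p,\dots,rv_s^p)$ and the canonical identification sends $r\otimes\sum v_lb_l\mapsto(rv_1^p,\dots,rv_s^p)$, the square commutes with the top horizontal map being the identity on $R^s$, and applying $F^i$ to this square gives the claim at each level.
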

\begin{proof}[Proof]
Use induction on $s$. Notice that $N$ is generated by $s$ element and clearly $f$ acts bijectively on $M_s$.\par
When $s=1$, $N$ is generated by a single element $a\in M_0$ with $f(a)=a$ by Lemma \ref{basis}. Then $N\cong R/I$,
where $I=\Ann_R(a)$. In this case, the map $\beta:F(N)\to N$ is given by
$1\otimes a\mapsto 1\cdot f(a)=a$. Let $\varphi$ denote the isomorphism
$N\xrightarrow{\sim}R/I$ and $\phi_j$ denote the isomorphism
$F^j(R/I)\xrightarrow{\sim}R/I^{[p^j]}$. It is not hard to see that we have the
following commutative diagram
$$\begin{CD}
  F^i(N)     @<<<     F^{i+1}(N)\\
 @VVF^i(\varphi)V      @VVF^{i+1}(\varphi)V\\
 F^i(R/I)    @<<<     F^{i+1}(R/I)\\
  @VV\phi_iV          @VV\phi_{i+1}V\\
 R/I^{[p^i]} @<<<     R/I{[p^{i+1}]}
\end{CD}$$

Hence, we have a map between two graded inverse systems
$$\begin{CD}
 N  @<<<      F(N)  @<<< F^2(N)   @<<<  ... \\
@VV\varphi V  @VV\varphi_1 V   @VV\varphi_2 V\\
R/I @<<<     R/I^{[p]}  @<<<   R/I^{[p^2]} @<<< ... 
\end{CD}$$
Since $F^j(\varphi)$ and $\phi_j$ are degree-preserving isomorphisms, we have
$$\sideset{^*}{}\varprojlim_{i\to\infty}F^i(N)\cong\sideset{^*}{}\varprojlim_{i\to\infty}R/I^{[p^i]}= R,$$
where the last equality follows from Example \ref{example}.\par
Assume that $s\geq 2$ and the proposition is true for $s-1$. Let
$a_1,\cdots,a_s$ denote the generators of $N$ with $f(a_i)=a_i$. They exist by Lemma \ref{basis}. Let $N'$ be
the submodule of $N$ generated by $a_s$. Then,
$$\sideset{^*}{}\varprojlim_{i\to\infty}(F^i(N')\cong R.$$
Since $\beta$ and $F^i(\beta)$ are degree preserving, we have the following exact sequence of inverse systems
$$\begin{CD}
@.   @AAA   @AAA    @AAA   \\
0   @>>>  F^i(N')_n  @>>>  F^i(N)_n  @>>> 
F^i(N/N')_n  @>>> 0\\
@.       @AAA         @AAA           @AAA\\
0   @>>>  F^{i+1}(N')_n @>>>  F^{i+1}(N)_n  @>>> 
F^{i+1}(N/N')_n @>>> 0\\
@.   @AAA   @AAA    @AAA    
\end{CD}$$ 
Since $N'$ is finitely generated, so are $F^i(N')$, and hence each $F^i(N')_n$ is a finite $k$-space. Thus, the inverse system $\{F^i(N')_n\}$ satisfies
the Mittag-Leffler condition (page 191 in \cite{ag}). So (by Proposition 9.1 on page 192 in \cite{ag})
$$0\to \varprojlim F^i(N')_n\to \varprojlim
F^i(N)_n\to \varprojlim F^i(N/N')_n\to 0$$ 
is exact for every $n$. And hence,
$$0\to \sideset{^*}{}\varprojlim F^i(N')\to \sideset{^*}{}\varprojlim
F^i(N)\to \sideset{^*}{}\varprojlim F^i(N/N')\to 0$$
is an exact sequence. By the inductive hypothesis, $\sideset{^*}{}\varprojlim
F^i(N/N')\cong R^{s-1}$ which is projective over
$R$, and hence the exact sequence splits,
$$\sideset{^*}{}\varprojlim F^i(N)\cong \sideset{^*}{}\varprojlim
F^i(N')\oplus R^{s-1}\cong R\oplus R^{s-1}\cong
R^s.$$ 
\end{proof}

\begin{pro}
\label{f-on-ext}
Let $\cM=\Ext^{n+1-i}_R(\Ext^{n+1-j}_R(R/I,R),R)$. There is a natural action of Frobenius $f$ on $\cM$ satisfying $$\deg(f(m))=p\deg(m),$$ for all homogeneous element $m\in \cM$.
\end{pro}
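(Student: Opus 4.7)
The plan is to construct the Frobenius action on $\cM$ by starting from the canonical $p$-linear endomorphism of $R/I$ and invoking the isomorphisms $F(\Ext^t_R(-,R))\cong \Ext^t_R(F(-),R)$ from Remark \ref{grading} twice. The key observation is that contravariance of $\Ext^t_R(-,R)$ reverses direction, so after applying it twice one recovers an arrow $F(\cM)\to \cM$, which is precisely an action of Frobenius in the sense defined at the start of Section 3.

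First I would let $\beta_0:F(R/I)\to R/I$ be the $R$-linear map corresponding to the ring-theoretic Frobenius $\bar r\mapsto \bar r^p$ on $R/I$; under the identification $F(R/I)\cong R/I^{[p]}$, it is the canonical surjection induced by the inclusion $I^{[p]}\subseteq I$. The formula $\beta_0(r\otimes \bar s)=r\bar s^p$ shows $\beta_0$ is degree-preserving under the grading $\deg(r\otimes\bar s)=\deg(r)+p\deg(\bar s)$ of Remark \ref{grading}. Applying $\Ext^{n+1-j}_R(-,R)$ to $\beta_0$ and composing with the degree-preserving natural isomorphism
$$\Ext^{n+1-j}_R(F(R/I),R)\cong F(\Ext^{n+1-j}_R(R/I,R))$$
gives a degree-preserving map $\tilde\beta_1:M_1\to F(M_1)$, where $M_1:=\Ext^{n+1-j}_R(R/I,R)$. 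Applying $\Ext^{n+1-i}_R(-,R)$ to $\tilde\beta_1$ and composing with the analogous isomorphism $\Ext^{n+1-i}_R(F(M_1),R)\cong F(\cM)$ from Remark \ref{grading} yields a degree-preserving $R$-linear map $\beta:F(\cM)\to\cM$.

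Finally, define $f:\cM\to\cM$ by $f(m):=\beta(1\otimes m)$. The $R$-linearity of $\beta$, combined with the tensor relation $1\otimes rm=r^p\otimes m$ in $R^{(1)}\otimes_R\cM$, gives $f(rm)=r^pf(m)$, so $f$ is an action of Frobenius, and for homogeneous $m$ the degree-preservation of $\beta$ forces $\deg(f(m))=\deg(1\otimes m)=p\deg(m)$, as required. I do not expect any real obstacle: the substantive content, namely the compatibility of the Frobenius functor with $\Ext$ and the grading, has already been established in Remark \ref{grading}; the one remaining ingredient, that applying $\Ext^t_R(-,R)$ to a degree-preserving map of graded modules again yields a degree-preserving map, is the standard consequence of computing Ext with degree-preserving graded free resolutions.
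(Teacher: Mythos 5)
Your construction is correct and is essentially the one the paper uses: the paper defines $f=\alpha_2\circ\alpha_1\circ\alpha$ where $\alpha(m)=1\otimes m$ and $\alpha_2\circ\alpha_1$ is precisely the map $\beta:F(\cM)\to\cM$ you build (the paper even records this identification immediately after the proof), the only presentational difference being that the paper passes through $\Ext^{n+1-i}_R(\Ext^{n+1-j}_R(R/I^{[p]},R),R)$ explicitly whereas you keep the intermediate module $F(M_1)$. Your observation that contravariance of $\Ext_R(-,R)$ applied twice restores the correct direction, and your degree bookkeeping via Remark \ref{grading}, both match the paper's argument.
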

\begin{proof}
We construct an action of Frobenius on
$$\cM=\Ext^{n+1-i}_R(\Ext^{n+1-j}_R(R/I,R),R)$$ as follows. Let $\alpha:\cM\to
F(\cM)$ be defined by $m\mapsto 1\otimes m$ for all $m\in \cM$, then $\alpha$ is a $p$-linear map, because $$\alpha(rm)=1\otimes rm=1\cdot r\otimes m=r^p\cdot 1\otimes m=r^p\alpha_1(m).$$ Let $\alpha_1:F(\cM)\to
\Ext^{n+1-i}_R(\Ext^{n+1-j}_R(R/I^{[p]},R),R)$ be the isomorphism derived from
the flatness of $R^{(1)}$ and the isomorphism $F(R/I)\cong R/I^{[p]}$. Since the isomorphism
$$F(R/I)\xrightarrow{\sim}R/I^{[p]},\ r'\otimes \bar{r}\mapsto \overline{r'r^p}$$
is degree-preserving with respect to the grading introduced in Remark \ref{grading}, $\alpha_1$ is also degree-preserving. 
Let $$\alpha_2:\Ext^{n+1-i}_R(\Ext^{n+1-j}_R(R/I^{[p]},R),R)\to \cM$$ be the
homomorphism induced by the surjection $R/I^{[p]}\to R/I$. Notice that $\alpha_2$ is also degree-preserving when $\Ext^{n+1-i}_R(\Ext^{n+1-j}_R(R/I^{[p]},R),R)$ and $\cM$ are considered as graded $R$-modules with the usual grading, i.e., the grading induced from the standard grading on $R$, since the natural surjection $R/I^{[p]}\to R/I$ is degree-preserving. Then
$$f:=\alpha_2\circ\alpha_1\circ\alpha$$ is an action of Frobenius on $\cM$, since $\alpha$ is $p$-linear and $\alpha_1,\alpha_2$ are $R$-linear.\par
Since $\deg(r'\otimes m)=\deg(r')+p\deg(m)$ where $r'\in R^{(1)}$ and $m\in \cM$ are homogeneous elements, one has
$$\deg(\alpha(m))=\deg(1\otimes m)=p\deg(m).$$ But $\alpha_1$ and $\alpha_2$ are degree-preserving, thus 
$$\deg(f(m))=\deg(\alpha_2\circ\alpha_1\circ\alpha(m))=\deg(\alpha(m))=p\deg(m).$$
\end{proof}

It is also worth pointing out that the homomorphism $\beta: F(\cM)\to\cM$ induced by the action of Frobenius on $\cM$ is actually $\alpha_2\circ\alpha_1$ in the proof of Proposition \ref{f-on-ext}.\par

\begin{pro}
\label{graded-inverse}
Let $S$ be a graded commutative ring. Let $N$ be a graded $S$-module and $\{M_i,\theta_{ij}:M_i\to M_j\}$ be a direct system of graded $S$-modules where $\theta_{ij}$ are degree-preserving $S$-module homomorphisms. Then
$$\sideset{^*}{_S}\Hom(\varinjlim_iM_i,N)\cong \sideset{^*}{}\varprojlim_i\sideset{^*}{_S}\Hom(M_i,N).$$
\end{pro}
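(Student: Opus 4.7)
The plan is to reduce the graded isomorphism to the classical ungraded adjunction $\Hom(\varinjlim M_i,N)\cong \varprojlim \Hom(M_i,N)$ by unpacking both sides degree by degree and then reassembling.

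First I would record the observation that, since the structure maps $\theta_{ij}$ are degree-preserving, the direct limit $\varinjlim_i M_i$ is naturally graded, with $(\varinjlim_i M_i)_m=\varinjlim_i(M_i)_m$ for every integer $m$ (this was already noted in the paragraph immediately preceding the proposition). In particular, the maps $\sideset{^*}{_S}\Hom(M_j,N)\to \sideset{^*}{_S}\Hom(M_i,N)$ dual to $\theta_{ij}$ are degree-preserving, so the graded inverse limit on the right-hand side is defined in the sense of the earlier discussion.

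Next I would fix an integer $n$ and identify the degree-$n$ pieces on both sides. The degree-$n$ part of $\sideset{^*}{_S}\Hom(\varinjlim_i M_i,N)$ is, by definition, the abelian group of $S$-linear maps $\varphi:\varinjlim_i M_i\to N$ that raise degrees by exactly $n$. By the universal property of the colimit, such a $\varphi$ corresponds bijectively to a compatible family $\{\varphi_i:M_i\to N\}_i$ with $\varphi_j\circ\theta_{ij}=\varphi_i$. The key verification is that $\varphi$ raises degrees by $n$ if and only if every $\varphi_i$ does so: the ``only if'' direction is immediate because the canonical maps $M_i\to\varinjlim_i M_i$ are degree-preserving, and the ``if'' direction uses that every homogeneous element of $(\varinjlim_i M_i)_m=\varinjlim_i(M_i)_m$ is represented by a homogeneous element of some $M_i$ of the same degree. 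This yields
$$\sideset{^*}{_S}\Hom(\varinjlim_i M_i,N)_n \;\cong\; \varprojlim_i \sideset{^*}{_S}\Hom(M_i,N)_n.$$

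Finally, summing over $n$ and invoking the definition of $\sideset{^*}{}\varprojlim$ gives
$$\sideset{^*}{_S}\Hom(\varinjlim_i M_i,N) = \bigoplus_n \sideset{^*}{_S}\Hom(\varinjlim_i M_i,N)_n \cong \bigoplus_n \varprojlim_i \sideset{^*}{_S}\Hom(M_i,N)_n = \sideset{^*}{}\varprojlim_i\sideset{^*}{_S}\Hom(M_i,N),$$
and the isomorphism is manifestly natural in $N$ and in the system $\{M_i\}$. I expect no serious obstacle; the only delicate point is the biconditional identifying degree-$n$ maps out of the colimit with compatible families of degree-$n$ maps out of the $M_i$, and this is precisely where the hypothesis that the $\theta_{ij}$ preserve degree is consumed.
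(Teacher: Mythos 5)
Your proof is correct and takes essentially the same approach as the paper: both reduce to identifying the degree-$n$ pieces $\Hom_S(\varinjlim_i M_i,N)_n\cong\varprojlim_i\Hom_S(M_i,N)_n$ and then assemble over $n$. The only cosmetic difference is that you invoke the universal property of the colimit once to get the bijection and then check it respects degree, whereas the paper writes out the map $g(\varphi)=(\varphi\circ\theta_i)_i$ on degree-$n$ pieces and verifies injectivity and surjectivity by hand.
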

\begin{proof}[Proof]
It suffices to show that 
$$\Hom_S(\varinjlim_iM_i,N)_n\cong \varprojlim_i(\Hom_S(M_i,N)_n)$$
for every integer $n$.\par
Let $\theta_i$ denote the natural (degree-preserving) map $M_i\to \varinjlim_iM_i$. Define $$g:\Hom_S(\varinjlim_iM_i,N)_n\to
\varprojlim_i(\Hom_S(M_i,N)_n)$$ as follows. For any
$\varphi\in\Hom_S(\varinjlim_iM_i,N)_n$,
$g(\varphi):=(\cdots,\varphi\circ\theta_i,\cdots)$. Since
$\varphi\circ\theta_j\circ\theta_{ij}=\varphi\circ\theta_i$ and the transition
homomorphisms in the inverse system $\{\Hom_S(M_i,N)_n\}$ are given by composing
with $\theta_{ij}$ which are degree-preserving, $g$ is well-defined.\par
First we prove that $g$ is an injection.  Assume that $\varphi\in \Hom_S(\varinjlim_iM_i,N)_n$ is not 0 and $g(\varphi)=0$. Then there exists a nonzero element $y\in \varinjlim_iM_i$ such that $\varphi(y)\neq 0$. Since $\{M_i\}$ is a direct system, there exist an index $i$ and an element $y_i\in M_i$ such that $\theta_i(y_i)=y$. Then
$\varphi\circ\theta_i(y_i)=\varphi(y)\neq 0$, hence $\varphi\circ\theta_i\neq 0$, so
$g(\varphi)\neq 0$, a contradiction.\par
Next we prove that $g$ is a surjection. Let $(\cdots,\varphi_i,\cdots)$ be an
arbitrary element in $\varprojlim_i(\Hom_S(M_i,N)_n)$, i.e., $\varphi_i$ satisfy
$\varphi_i=\varphi_j\circ\theta_{ij}$ for all $j\geq i$. From the universal property of $\varinjlim_iM_i$, there exists a (unique) homomorphism
$\varphi\in \Hom_S(\varinjlim_iM_i, N)_n$ such that $\varphi\circ\theta_i=\varphi_i$, hence
$g(\varphi)=(\cdots,\varphi_i,\cdots)$.
\end{proof}

Let $R=k[x_0\dots,x_n]$ be the polynomial ring over $k$ and $\mathfrak{m}=(x_0,\dots,x_n)$. Let $\E$ denote the injective hull of $R/\mathfrak{m}$ and $\sideset{^*}{}\E$ denote the $^*$injective hull of $R/\mathfrak{m}$ (cf. 13.2.1 in \cite{bs}).
\begin{pro}
\label{limit-lambda}
Let $\cM=\Ext^{n+1-i}_R(\Ext^{n+1-j}_R(R/I,R),R)$. Then 
$$\sideset{^*}{}\varprojlim_eF^e(\cM)\cong R^{\lambda_{i,j}(A)}.$$
\end{pro}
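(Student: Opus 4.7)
The plan is to use graded local duality to transform each $F^e(\cM)$ into a shift of the graded Matlis dual of $H^i_{\mathfrak{m}}(T_e)$ for $T_e := \Ext^{n+1-j}_R(R/I^{[p^e]}, R)$, then pass the graded inverse limit through Matlis duality by Proposition~\ref{graded-inverse}, and finally invoke the Huneke--Sharp theorem in characteristic $p$ to identify the answer with $R^{\lambda_{i,j}(A)}$.

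First I would iterate Remark~\ref{grading} and use the degree-preserving isomorphism $F^e(R/I) \cong R/I^{[p^e]}$ to identify $F^e(\cM) \cong \Ext^{n+1-i}_R(T_e, R)$ as graded $R$-modules, with the transition maps $F^{e+1}(\cM) \to F^e(\cM)$ of the inverse system corresponding to the contravariant maps induced by the natural surjections $R/I^{[p^{e+1}]} \twoheadrightarrow R/I^{[p^e]}$. Graded local duality (13.4.6 in \cite{bs}) then gives a natural degree-preserving isomorphism $\Ext^{n+1-i}_R(T_e, R(-n-1)) \cong H^i_{\mathfrak{m}}(T_e)^\vee$, where $(-)^\vee := \sideset{^*}{}\Hom_R(-, \sideset{^*}{}\E)$ denotes the graded Matlis dual. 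Shifting, this reads $F^e(\cM) \cong H^i_{\mathfrak{m}}(T_e)^\vee(n+1)$.

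Applying Proposition~\ref{graded-inverse} with $N = \sideset{^*}{}\E$, together with the commutation of $H^i_{\mathfrak{m}}$ with filtered colimits (each $T_e$ being finitely generated) and Lyubeznik's formula $\varinjlim_e T_e = H^{n+1-j}_I(R)$ (valid in characteristic $p$), I obtain
$$\sideset{^*}{}\varprojlim_e F^e(\cM) \cong \left(\varinjlim_e H^i_{\mathfrak{m}}(T_e)\right)^\vee(n+1) \cong H^i_{\mathfrak{m}}(H^{n+1-j}_I(R))^\vee(n+1).$$
At this point the Huneke--Sharp theorem in characteristic $p$ yields a graded isomorphism $H^i_{\mathfrak{m}}(H^{n+1-j}_I(R)) \cong H^{n+1}_{\mathfrak{m}}(R)^{\lambda_{i,j}(A)}$ (the total socle dimension being $\lambda_{i,j}(A)$, matching Lyubeznik's definition). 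Since the graded Matlis dual of $H^{n+1}_{\mathfrak{m}}(R)$ is $R(-n-1)$, applying $(-)^\vee$ and then shifting by $n+1$ produces precisely $R^{\lambda_{i,j}(A)}$.

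The step I expect to be the main obstacle is establishing the graded form of the Huneke--Sharp identification in exactly the shape required (namely a direct sum of copies of $H^{n+1}_{\mathfrak{m}}(R)$ with no extraneous internal degree shifts), and carefully tracking all the grading shifts introduced by local duality so that the final answer is the unshifted free module $R^{\lambda_{i,j}(A)}$ rather than some twist $R(m)^{\lambda_{i,j}(A)}$. An alternative route would be to combine Proposition~\ref{limit-stable} (which already identifies the same limit with $R^{\dim_k \cM_s}$) with an independent proof that $\dim_k \cM_s = \lambda_{i,j}(A)$, but this is tantamount to proving the Main Theorem directly, so I would stick with the duality approach above.
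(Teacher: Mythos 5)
Your proposal runs essentially the same chain of isomorphisms as the paper's proof, just in the opposite direction: you start from $\sideset{^*}{}\varprojlim_e F^e(\cM)$ and unwind it forward to $R^{\lambda_{i,j}(A)}$, whereas the paper starts from $R^{\lambda_{i,j}(A)}$ and winds up at the limit, and both use the same ingredients---the identification $F^e(\cM)\cong\Ext^{n+1-i}_R(\Ext^{n+1-j}_R(R/I^{[p^e]},R),R)$, graded local duality, commutation of $H^i_{\mathfrak{m}}$ with colimits, Proposition~\ref{graded-inverse}, and the structural isomorphism $H^i_{\mathfrak{m}}(H^{n+1-j}_I(R))\cong\E^{\lambda_{i,j}(A)}$ (which the paper cites as Lemma 2.2 of \cite{l3} rather than as Huneke--Sharp). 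The grading subtlety you flag is handled in the paper in exactly the way you anticipate, via $\sideset{^*}{}\E=\E(-n-1)$, and in any case the Main Theorem only needs the isomorphism as ungraded $R$-modules, since Proposition~\ref{limit-stable} supplies the other rank.
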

\begin{proof}[Proof]
\begin{align}
R^{\lambda_{i,j}(A)} &\cong \sideset{^*}{_R}\Hom(\sideset{^*}{^{\lambda_{i,j}(A)}}\E,\sideset{^*}{}\E)\tag{i}\\
                           &\cong \sideset{^*}{_R}\Hom(H^i_{\mathfrak{m}}(H^{n+1-j}_I(R))(-n-1),\sideset{^*}{}\E)\tag{ii}\\
                           &\cong \sideset{^*}{_R}\Hom(H^i_{\mathfrak{m}}(\varinjlim_e\Ext^{n+1-j}_R(R/I^{[p^e]},R))(-n-1),\sideset{^*}{}\E)\tag{iii}\\
                           &\cong \sideset{^*}{_R}\Hom(\varinjlim_eH^i_{\mathfrak{m}}(\Ext^{n+1-j}_R(R/I^{[p^e]},R(-n-1))),\sideset{^*}{}\E)\tag{iv}\\
                           &\cong \sideset{^*}{_e}\varprojlim\sideset{^*}{_R}\Hom(H^i_{\mathfrak{m}}(\Ext^{n+1-j}_R(R/I^{[p^e]},R(-n-1)))),\sideset{^*}{}\E)\tag{v}\\
                          &\cong \sideset{^*}{_e}\varprojlim\Ext^{n+1-i}_R(\Ext^{n+1-j}_R(R/I^{[p^e]},R(-n-1)),R(-n-1))\tag{vi}\\
                         &\cong \sideset{^*}{_e}\varprojlim(\Ext^{n+1-i}_R(\Ext^{n+1-j}_R(R/I^{[p^e]},R),R))\notag\\
                         &\cong \sideset{^*}{_e}\varprojlim(F^e(\cM))\notag
\end{align}
(i) follows from Graded Matlis Duality (13.4.5 in \cite{bs}).\\
(ii) holds because $H^i_{\mathfrak{m}}(H^{n+1-j}_I(R))\cong \E^{\lambda_{i,j}(A)}$ by Lemma 2.2 in \cite{l3} and $\sideset{^*}{}\E=\E(-n-1)$ by 13.3.9 in \cite{bs}.\\
(iii) follows from the definition of local cohomology.\\
(iv) holds since local cohomology commutes with direct limits and degree shifting.\\
(v) follows from Proposition \ref{graded-inverse}.\\
(vi) is a consequence of Graded Local Duality (13.4.2 in \cite{bs}).

\end{proof}

\begin{proof}[Proof of Main Theorem]
We still use $\cM$ to denote $\Ext^{n+1-i}_R(\Ext^{n+1-j}_R(R/I,R),R)$. From Proposition \ref{f-on-ext}, we know that $\cM$ satisfies the hypothses in Lemma \ref{limit}, hence, by Proposition \ref{limit-stable} and Proposition \ref{limit-lambda}, we have 
$$\lambda_{i,j}(A)=\dim_k(\cM_s).$$ 
\end{proof}

\section{Acknowledgment}
This paper is from my thesis, I would like to thank my advisor, Professor Gennady Lyubeznik, for his support and guidance. I also want to thank Professor Joseph Lipman for pointing out reference \cite{gr} to me.
  
\end{document}